\newtheorem{theorem}{Theorem}
\newtheorem{lemma}{Lemma}
\newtheorem{coro}{Corollary}
\tikzstyle{startstop} = [rectangle, rounded corners, minimum width=3cm, minimum height=1cm, text centered, draw=black, fill=gray!10] 
\tikzstyle{process} = [rectangle, minimum width=3cm, minimum height=1cm, text centered, draw=black, fill=gray!20] 
\tikzstyle{decision} = [diamond, minimum width=3cm, minimum height=1cm, text centered, draw=black, fill=gray!30] 
\tikzstyle{arrow} = [thick,->,>=stealth]
\let\svthefootnote\thefootnote
\newcommand\freefootnote[1]{%
  \let\thefootnote\relax%
  \footnotetext{#1}%
  \let\thefootnote\svthefootnote%
}
\begin{document}

\title{A Novel Generalization of the Liouville  Function $\lambda(n)$ and a Convergence Result for the Associated Dirichlet Series}


\author{\fnm{Sky Pelletier} \sur{Waterpeace} \orcidlink{0000-0002-2231-0160}}

\affil{\orgname{Rowan University}, \orgaddress{\city{Glassboro}, \state{NJ}, \postcode{08028}, \country{USA}}}
\affil{\orgname{Southern New Hampshire University}, \orgaddress{\city{Manchester}, \state{NH}, \postcode{03106}, \country{USA}}\freefootnote{\textbf{Statements and Declarations} This research did not receive any specific grant from funding agencies in the public, commercial, or not-for-profit sectors and there are no interests to declare. Affiliation information is for contact purposes only and does not imply any support or affiliation with this work.}}

\email{waterpeace@rowan.edu}\email{s.waterpeace@snhu.edu}


\abstract{We introduce a novel arithmetic function $w(n)$, a generalization of the Liouville function $\lambda(n)$, as the coefficients of a Dirichlet series.  By spatially encoding information in a natural way about the distribution of prime factors among natural numbers, $w(n)$ allows results to be obtained which rely intrinsically on the distribution of primes without having direct knowledge of that distribution.  We prove some properties of the distribution of $w(n)$ and then provide a result on the convergence of its Dirichlet series.
A parametrized family of functions $w_m(n)$ is defined of which $w(n)$ is a special case.   We show that each function $w_m(n)$ injectively maps $\mathbb{N}$ into a dense subset of the unit circle in $\mathbb{C}$ and that each $F_m(s) = \sum_n \frac{w_m(n)}{n^s}$ converges for all $s$ with $\Re(s)\in\left(\frac{1}{2},1\right)$.  Finally, we show that the family of functions $w_m(n)$ converges to $\lambda(n)$ and that $F_m(s)$ converges uniformly in $m$ to $\sum_n \frac{\lambda(n)}{n^s}$, implying convergence of that series in the same region and thereby proving an interesting property about a closely related function.}

\keywords{Liouville function, Dirichlet series, Landau Hypothesis, Riemann Hypothesis}
\pacs[MSC Classification]{11M26, 11M06}

\maketitle

\newpage

\section{Introduction}

In this paper, a novel arithmetic function $w(n)$ is introduced which spatially encodes information about the distribution of prime factors among natural numbers in a natural way and which allows results to be obtained which rely intrinsically on the distribution of primes without having direct knowledge of that distribution.

\paragraph{Definition of $F(s)$ and $w(n)$} 

We define an arithmetic function $w(n)$ as the coefficients of the formal Dirichlet series given by
\begin{linenomath*}\begin{equation}\label{F1}
F(s) = \sum_{n=1}^\infty \frac{w(n)}{n^s} := 
    \prod_{p\in\mathbb{P}} \left( 1 - \frac{e^{i\psi(p)}}{p^s} 
    +\frac{e^{i\psi(p^2)}}{p^{2s}}
    - \frac{e^{i\psi(p^3)}}{p^{3s}}
    + \cdots \right),
\end{equation}\end{linenomath*}
where $\mathbb{P} = \{p_j\}_{j\in\mathbb{N}}$ is the set of ordered primes and for each $p\in\mathbb{P}$, $\psi(p^k)$ is given by 
\begin{linenomath*}\begin{equation}\label{psi_def}
\psi(p^k) := \frac{\pi}{p^2G}\left(1 - \left(\frac{p-1}{p}\right)^k\right),
\end{equation}\end{linenomath*}
in which we are letting $G=\sum_{p\in\mathbb{P}} p^{-2}$ be the sum of the reciprocals of the primes squared.\footnote{That is, we use $G$ as shorthand for $P(2)$, where $P$ is the prime zeta function.}    We observe that $F(s)$ is a kind of Euler product defining each $w(n)$ based on the prime factorization of $n$ and that (\ref{psi_def}) naturally gives $\psi(p^0)=0$ and hence $w(1)=1$.  Although we are considering the series defining $F(s)$ formally, we note that each factor on the right of (\ref{F1}) can easily be seen to converge geometrically for any $p\in\mathbb{P}$ and $s>0$, since we have
\begin{linenomath*}
    \begin{equation}
        \left|\sum_{k=0}^{\infty} (-1)^k \frac{ie^{\psi(p^k)}}{p^{ks}}\right|
        \leq \sum_{k=0}^{\infty} \frac{1}{p^{ks}} = \frac{1}{1-p^{-s}}.
    \end{equation}
\end{linenomath*}
On the other hand, the convergence of the left-hand side of (\ref{F1}) in the same region is a much more interesting question and is a main result of this paper.

\begin{lemma}\label{properties}
$w(n)$ has the following properties:
\begin{enumerate}
    \item \label{multiplicative} $w(n)$ is multiplicative: $w(ab)$ = $w(a)w(b)$ for any coprime $a,b\in\mathbb{N}$
    \item \label{injective} $w:\mathbb{N}\to C$ is injective, where $C = \{z\in\mathbb{C}\ \big|\ |z|=1\}$, the unit circle in $\mathbb{C}$.
    \item \label{llike} $\arg(w(n))\in[0,\pi)$ for $\lambda(n)=1$ and $\arg(w(n)) \in(\pi,2\pi)$ for $\lambda(n)=-1$, where $\lambda(n)$ is the Liouville function.
\end{enumerate}
\end{lemma}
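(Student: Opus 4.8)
The plan is to first read off a closed form for $w(n)$ from the Euler product \eqref{F1}. Expanding the product, the coefficient of $n^{-s}$ with $n=\prod_j p_j^{a_j}$ is obtained by taking the $p_j^{-a_js}$-term of the $p_j$-th factor for each $j$ and the constant term $1$ of every other factor; since the $p$-th factor contributes $(-1)^k e^{i\psi(p^k)}$ as the coefficient of $p^{-ks}$ (note $\psi(p^0)=0$, so this is $1$ at $k=0$), this yields
\[
  w(n) \;=\; \prod_j (-1)^{a_j} e^{i\psi(p_j^{a_j})} \;=\; \lambda(n)\, e^{i\Psi(n)},
  \qquad \Psi(n):=\sum_{p^a\,\|\,n}\psi(p^a),
\]
with $\Psi(1)=0$ and $\lambda$ the Liouville function. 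Property \ref{multiplicative} is then immediate, either from this formula (for coprime $a,b$ the exact prime powers of $ab$ are the disjoint union of those of $a$ and of $b$, and both $\lambda$ and $e^{i\Psi(\cdot)}$ split multiplicatively across that partition) or directly from the Euler-product structure of \eqref{F1}.

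The second step is an elementary bound on $\Psi$. Since $\psi(p^k)=\frac{\pi}{p^2G}\left(1-\left(\frac{p-1}{p}\right)^k\right)$ vanishes at $k=0$ and is strictly positive for $k\ge 1$ with $\psi(p^k)<\frac{\pi}{p^2G}$, summing over the primes dividing $n$ and using $G=\sum_p p^{-2}$ gives $0\le\Psi(n)<\sum_p\frac{\pi}{p^2G}=\pi$, with $\Psi(n)=0$ exactly when $n=1$. Hence $|w(n)|=1$, so $w$ maps $\mathbb{N}$ into $C$; writing $-1=e^{i\pi}$ we get $\arg(w(n))=\Psi(n)\in[0,\pi)$ when $\lambda(n)=1$ and $\arg(w(n))=\Psi(n)+\pi$ when $\lambda(n)=-1$, and in the latter case $n>1$ forces $\Psi(n)>0$, so $\arg(w(n))\in(\pi,2\pi)$. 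This establishes property \ref{llike}, and it also shows that $\operatorname{sgn}\lambda(n)$ is recoverable from $\arg(w(n))$.

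For property \ref{injective}, suppose $w(m)=w(n)$. By the previous step the two arguments lie in the same one of the disjoint arcs $[0,\pi)$ and $(\pi,2\pi)$, so $\lambda(m)=\lambda(n)$ and therefore $e^{i\Psi(m)}=e^{i\Psi(n)}$; since $\Psi(m),\Psi(n)\in[0,\pi)$, this forces $\Psi(m)=\Psi(n)$. It remains to prove that $n\mapsto\Psi(n)$ is injective. A metric/approximation argument is useless here, since e.g. $\Psi(2^N)$ and $\Psi(2^{N+1})$ can be made arbitrarily close; instead I would clear the transcendental factor and observe that
\[
  \frac{G}{\pi}\,\Psi(n) \;=\; \sum_{p\mid n}\frac{p^{a_p}-(p-1)^{a_p}}{p^{a_p+2}}, \qquad a_p:=v_p(n),
\]
is a \emph{rational} number, then analyze it one prime at a time. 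Fix a prime $q$: every summand with $p\ne q$ has denominator a power of $p$ and is thus a $q$-adic integer, whereas the $p=q$ summand (when $q\mid n$) has numerator $q^{a_q}-(q-1)^{a_q}\equiv(-1)^{a_q+1}\pmod q$, hence coprime to $q$, and so has $q$-adic valuation exactly $-(a_q+2)$. By the ultrametric inequality the $q$-adic valuation of $\frac{G}{\pi}\Psi(n)$ is therefore $-(v_q(n)+2)$ if $q\mid n$ and $\ge 0$ otherwise; either way it determines $v_q(n)$. Thus $\Psi(m)=\Psi(n)$ gives $v_q(m)=v_q(n)$ for all primes $q$, i.e. $m=n$.

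The step I expect to demand the most care is precisely this last valuation computation: verifying that $\gcd\!\left(q^{a}-(q-1)^{a},\,q\right)=1$ for every prime $q$ and $a\ge1$, and that each summand's reduced denominator is a power of a single prime. Once that is in place the $q$-adic argument, and with it the injectivity of $w$, follows cleanly; the remaining ingredients are routine.
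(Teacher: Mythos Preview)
Your proof is correct and follows essentially the same approach as the paper for properties~\ref{multiplicative} and~\ref{llike}: you derive the closed form $w(n)=\lambda(n)e^{i\Psi(n)}$ and bound $\Psi(n)<\pi$ via $\psi(p^k)<\tfrac{\pi}{p^2G}$ and $\sum_p\tfrac{1}{p^2G}=1$, exactly as the paper does. For property~\ref{injective} the paper gives only a one-line hint (``follows directly from the arithmetic of fractions with coprime denominators''), whereas you supply a full $q$-adic valuation argument showing that $v_q\!\left(\tfrac{G}{\pi}\Psi(n)\right)$ recovers $v_q(n)$; this is a rigorous realization of the same idea the paper gestures at, and your check that $q^{a}-(q-1)^{a}\equiv(-1)^{a+1}\pmod q$ is precisely the non-cancellation needed to make ``coprime denominators'' work.
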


\begin{proof}
Properties \ref{multiplicative} and \ref{injective} are obvious.  We therefore proceed by proving Property \ref{llike}. 

Observe that if the prime factorization of $n>1$ is given by $n=p_1^{k_1} p_2^{k_2} \cdots p_J^{k_J}$, then we have
\begin{linenomath*}\begin{align}
    \frac{w(n)}{n^s} & = \frac{\mleft(-1\mright)^{k_1}e^{i\psi\mleft(p_1^{k_1}\mright)}}{p_1^{k_1 s}} 
        \times    
    \frac{\mleft(-1\mright)^{k_2}e^{i\psi\mleft(p_2^{k_2}\mright)}}{p_2^{k_2 s}}
        \times\cdots\times
    \frac{\mleft(-1\mright)^{k_J}e^{i\psi\mleft(p_J^{k_J}\mright)}}{p_J^{k_J s}} \\
    & = \frac{\mleft(-1\mright)^{k_1 + k_2+ \cdots + k_J} e^{i\mleft(\psi\mleft(p_1^{k_1}\mright)+
    \psi\mleft(p_2^{k_2}\mright)+\cdots+
    \psi\mleft(p_J^{k_J}\mright)\mright)}}
    {p_1^{k_1s}p_2^{k_2s}\cdots p_J^{k_Js}} \\
    &= \frac{\lambda(n)
        e^{i\mleft(\psi\mleft(p_1^{k_1}\mright)+
    \psi\mleft(p_2^{k_2}\mright)+\cdots+
    \psi\mleft(p_J^{k_J}\mright)\mright)}}
    {\mleft(p_1^{k_1}p_2^{k_2}\cdots p_J^{k_J}\mright)^s} \\
    & = \frac{e^{i\frac{\pi}{2}(1-\lambda(n))}
        e^{i\mleft(\psi\mleft(p_1^{k_1}\mright)+
            \psi\mleft(p_2^{k_2}\mright)+\cdots+
            \psi\mleft(p_J^{k_J}\mright)\mright)}}
        {n^s},
\end{align}\end{linenomath*}
where $\lambda(n)$ is the Liouville lambda function.  We thus have 
$w(n)=e^{i\theta(n)}$, in which
\begin{linenomath*}\begin{equation}\label{argument}
    \theta(n) = \arg(w(n))=\frac{\pi}{2}\left(1-\lambda(n)\right) + \sum_{j=1}^J \psi\bigl(p_j^{k_j}\bigr).
\end{equation}\end{linenomath*}

It is sufficient to observe that
\begin{align}
    \sum_{j=1}^J \psi\bigl(p_j^{k_j}\bigr) & = 
        \sum_{j=1}^J \frac{\pi}{p_j^2 G} \left(1 - \left(\frac{p_j -1}{p_j}\right)^{k_j}\right)\\
    & = \pi \frac{ \frac{1}{p_1^2} \left(1 - \left(\frac{p_1 -1}{p_1}\right)^{k_1}\right) + \cdots + \frac{1}{p_J^2} \left(1 - \left(\frac{p_J -1}{p_J}\right)^{k_J}\right)}
    {\frac{1}{2^2} +\frac{1}{3^2} +\frac{1}{5^2} + \cdots + \frac{1}{p_J^2} + \cdots}.\label{numerator}
\end{align}
Since $k_j \in \mathbb{N}$, we have that $0 < \left(1 - \left(\frac{p_j -1}{p_j}\right)^{k_j}\right) < 1$ for all $1 \leq j \leq J$.  It follows that the numerator in (\ref{numerator}) is always strictly smaller than the denominator.  Thus, $0 < \sum_{j=1}^J \psi\bigl(p_j^{k_j}\bigr) < \pi$, and the cases for $[0,\pi)$ and $(\pi,2\pi)$ both follow from this observation, taking the sum of the $k_j$'s even or odd, respectively, and recalling that $w(1)=1$.  This proves Property \ref{llike} and completes the proof of Lemma \ref{properties}.
\end{proof}


\section{Distribution of $w(n)$}\label{distribution}


\paragraph{Remark}

The structure of $w(n)$ as defined in (\ref{F1}) is quite rich.  This is not surprising, since  $w(n)$ is specifically designed to spatially encode information about the distribution of natural numbers based on their prime factorizations.
Though the exact distribution of points $w(n)$ in the unit circle in $\mathbb{C}$ is very intricate and is in many ways difficult to pin down, we will in this section provide several results.   It is (relatively) easy to observe that $w(n)$ has fractal structure, with the image of any subset $A$ of $\mathbb{N}$ infinitely repeated in scaled pseudocopies by multiplying every $a\in A$ by other natural numbers, for example by powers of $2$. 

Other observations require considerably more care.  In this section, we will prove that the set $\{w(n)\}_{n\in\mathbb{N}}$ is dense in the unit circle in $\mathbb{C}$, and the proof of this theorem will begin to reveal some of the subtle structure of $w(n)$.  A technical refinement of the density result, Hughes' Corollary, will be important for the convergence results in the next section, as will a result on the natural densities of elements $n\in\mathbb{N}$ whose images are in opposite arcs of the unit circle.  

\begin{theorem}[Density Theorem]\label{density}
The set of coefficients $\{w(n)\}$ for $n\in\mathbb{N}$ is dense in $C$, the unit circle in $\mathbb{C}$.
\begin{proof}[Sketch of proof]
It is straightforward to see how density can potentially be proven.  However, due to the necessity of dealing with primes whose values are unknown, certain details must be handled carefully.  The full proof is lengthy and fairly technical and is provided in Appendix A, and SageMath code demonstrating the algorithm in the full proof is provided in Appendix B.  We present here a sketch of a simple proof and refer the reader to the appendices for full details.\footnote{Special thanks to Kevin Hughes for this clarifying sketch.}

We will prove the equivalent statement that $\{\theta(n)\}_{n\in\mathbb{N}}$ is dense in $[0,2\pi)$.  Recall that we are letting $\mathbb{P} = \{p_j\}_{j\in\mathbb{N}}$ be the set of ordered primes, so that $p_1 =2$, $p_2=3$, $p_3=5$, \ldots, and that we define
\begin{equation}
    G=\sum_{p\in\mathbb{P}} \frac{1}{p^{2}} = 0.4522474200\cdots
\end{equation}
Without loss of generality, let $x\in[0,\pi)$.  If $x$ is the argument of $w(n)$ for some $n$, then $x$ must be of the form of the right hand side of (\ref{numerator}).  Hence, the general goal is to find primes $p_j$ with exponents $k_j$ so that the approximation
\begin{linenomath*}
    \begin{equation}\label{dens_approx}
        x \approx \frac{\pi}{G} \left[
        \frac{1}{p_1^2}\left(1-\left(\frac{p_1-1}{p_1}\right)^{k_1}\right) + \cdots +
        \frac{1}{p_J^2}\left(1-\left(\frac{p_J-1}{p_J}\right)^{k_J}\right) \right]
    \end{equation}
\end{linenomath*}
can be made arbitrarily precise for any given $x\in[0,\pi)$.  For $x$ close to $\pi$, this is trivial.  The difficulty comes in the fact that we do not know the exact contribution of nonzero exponents $k_j$ for any but a finite number of values $j$ corresponding to known primes.  

For a given $p\in\mathbb{P}$, let $\hat{\psi}(p)$ be its limiting maximum contribution to the right side of (\ref{dens_approx}), so that $\hat{\psi}(p) = \frac{\pi}{p^2 G}$.  Depending on the choice of $k$, $p^k$ contributes to $\theta(n)$ from zero up to something arbitrarily close to $\hat{\psi}(p)$.

The idea is to create a collection $I$ of indices of primes $p_j$ so that
\begin{linenomath*}
    \begin{equation}\label{sketch_sum}
        S = \frac{\pi}{G} \sum_{j\in I} \frac{1}{p_j^2}
    \end{equation}
\end{linenomath*}
is within an $\epsilon$-radius of $x$ for any chosen $\epsilon>0$.   By then choosing each $k_j$ sufficiently large, we can find arbitrarily many numbers $n$ for which the arguments of $w(n)$ are in the desired interval, which we refer to  as the \emph{target interval} or simply the \emph{target}.

Let $I=\emptyset$.  Start with $p_1$ and examine each prime in order, keeping track of the running sum $S$ from (\ref{sketch_sum}) as well as the sum of the $\hat{\psi}(p)$ for all the primes not yet examined, which we shall refer to as the \emph{tail} to draw from in the future. The current $p_j$ under consideration is not included in the tail.

Consider $S+\hat{\psi}(p_j)$.  If this sum is still less than the target interval, include $j$ in $I$ so that $\hat{\psi}(p_j)$ is now included in $S$.  On the other hand, if $S+ \hat{\psi}(p_j)$ is greater than the target interval, choose the largest $k_j$ so that the new running total is less than or equal to the target and instead add to $S$ only $\psi(p_j^{k_j})$ (with appropriate bookkeeping to keep track of which $j\in I$ do not have all of $\hat{\psi}(p_j)$ added).

What would be required for this algorithm to fail? For each $p_j$, prior to selecting $k_j$, as long as the running total plus $\hat{\psi}(p_j)$ plus the tail is greater than the target, the algorithm hasn't failed yet. Note that $\hat{\psi}(p_j)$ was part of the tail during the selection for the previous $p_{j-1}$.

Critically, failure at the $p_{j+1}$ step would require both (a) $S +\psi(p_j^{k_j}) + tail < target$, and (b) $S+ \psi(p^{(k+1)}) > target$.
However, by Bertrand's postulate we have that $\hat{\psi}(p_{j+1}) > \frac{1}{4}\hat{\psi}(p_j)$.  It is therefore clear that the size of the tail exceeds  $\frac{1}{4}\hat{\psi}(p_j)$. The question then is whether our selection of $k$ can lower the distance to the target interval to less than $\frac{1}{4}$ of $\hat{\psi}(p_j)$; it can, since incrementing $k$ from $0$ to $1$ only adds $\frac{1}{p}\hat{\psi}(p_j)$, which is less than $\frac{1}{4}\hat{\psi}(p_j)$ unless $p_j$ is 2 or 3, and the subsequent $k$'s contribute less than the first, for each $p$.  (For 2 and 3 we can numerically verify that the tail is big enough to avoid failure.)

Since the algorithm therefore cannot terminate in failure, it succeeds in approaching a limit in the target interval.  Since we only need to get inside the interval, there is a finite collection of $p_j$ with exponents $k_j$ which will be close enough.\footnote{Note again that in Appendix A a constructive proof is presented with full details.  The algorithm in the full proof considers batches of the terms of $G$ for consecutive primes simultaneously and only considers specific values of $k$ as needed until the end, since this turns out to be a special case.  SageMath code demonstrating the algorithm of the full proof is presented in Appendix B.}
\end{proof}
\end{theorem}


We now introduce two important technical consequences of the density of the set $\{w(n)\}$ which will be necessary for the convergence proofs in the next section.  

\begin{coro}[Hughes' Corollary\footnote{The author wishes to express his immense gratitude to his dear friend, Kevin Hughes, whose help with the proof of this assertion was indispensable, and whose decades of friendship are a tremendous blessing.}]\label{subset_dense}
    For every $N\in\mathbb{N}$, define $\Theta_N$ to be the set of principal arguments of $w(n)$ for all $n\leq N$.  Then for any $\epsilon > 0$, there exists $N_M$ sufficiently large that for all $N>N_M$, any interval $(a, a+\epsilon) \subset [0,\pi) \cup (\pi,2\pi)$ contains at least $N^x$ elements of $\Theta_N$ for any $x\in[0,1)$.
\begin{proof}
    For any $N\in\mathbb{N}$, define
    \begin{eqnarray}\label{bigtheta}
        \Theta_N = \left\{\theta(n)\ \big|\ n\leq N\right\},
    \end{eqnarray}
    where each $\theta(n)$ as defined in (\ref{argument}) is the principal argument of $w(n)$.
    Let $\epsilon>0$.  Without loss of generality, let $I$ be any $\epsilon$-width interval ${(a,a+\epsilon) \subset [0,\pi)}$. 
    The Density Theorem implies there exists $N_0 \in \mathbb{N}$ large enough that for some $n<N_0$, $\theta(n) \in (a+\pi,\ a+\pi+\frac{\epsilon}{2})$.

    Let $p_j\in\mathbb{P}$ be smallest such that $np_j>N_0$ and $\theta(p_j)<\frac{\epsilon}{2}$.  Note that $\psi(p_k) < \psi(p_j)$ for any $k>j$, and so it follows from Lemma \ref{properties} that $\theta(np_k)\in I$ for all $k\geq j$.  Hence, for any $M\in\mathbb{N}$, 
    \begin{equation}
        \left\{\theta\left(np_{j+m}\right) \big|\ m\in \mathbb{N}, m\leq M\right\} \subset I,
    \end{equation}
    and so when $N\geq p_{j+M}N_0$, there are at least $M$ elements of $\Theta_N$ in $I$.  For notational simplicity, let $P_M$ be the $(j+M)$th prime, so $P_M=p_{j+M}$, and let $N_M=P_M N_0$.  Note that by the Prime Number Theorem,
    \begin{equation}
        j+M \thicksim \frac{P_M}{\log P_M},
    \end{equation}
    and so
    \begin{equation}
        M \thicksim \frac{P_M}{\log P_M} - j.
    \end{equation}
    To show $M>N_M^x$ for all sufficiently large $M$, consider the ratio $\frac{M}{N_M^x}$.  We have that
    \begin{align}
        \frac{M}{N_M^x} &\thicksim \frac{ \frac{P_M}{\log P_M} - j}{N_M^x}
         = \frac{ \frac{P_M}{\log P_M} - j}{P_M^x N_0^x}\\
        &= \left(\frac{1}{N_0^x}\right) 
            \frac{\frac{P_M}{\log P_M} - j}{P_M^x}\\
        &= \left(\frac{1}{N_0^x}\right)\left(\frac{P_M^{1-x}}{\log P_M} - \frac{j}{P_M^x}\right).
    \end{align}
    Since $j,N_0$ are fixed for any given $\epsilon$, it is clear that we can choose $M$ large enough so that this ratio exceeds any positive number.  

    Our choice of $I$ to be contained in $[0,\pi)$ was arbitrary, and the case where $I$ is instead contained in $(\pi, 2\pi)$ is similar.  It follows that for any $\epsilon$-width interval contained in $[0, \pi) \cup (\pi, 2\pi)$, there exists $N_M$ sufficiently large that for all $N>N_M$ and any $x\in[0,1)$, there are at least $N^x$ elements $\theta(n)$, $n \leq N$, in that interval.
\end{proof}
\end{coro}


\begin{coro}\label{nat_dense}
    For any $x,y\in(0,\pi), x<y$, let
    \begin{eqnarray}
        A &=&\left\{ n \in \mathbb{N}\ \big|\ \theta(n) \in \left(x,y\right)\right\}\\
        B &=& \left\{ n \in \mathbb{N}\ \big|\ \theta(n) \in \left(x+\pi,y+\pi\right)\right\},
    \end{eqnarray}
    and define the natural density of a set $S$ in $\mathbb{N}$ by
    \begin{equation}
        \delta(S)=\lim_{N\to\infty} \frac{\left|\left\{n<N\ \big|\ n \in S\right\}\right|}{N},
    \end{equation}
    if that limit exists.
    Then $\delta(A)$ and $\delta(B)$ exist, are nonzero, and are equal.
\begin{proof}
    Let $x\in(0,\pi)$, and define
    \begin{eqnarray}
        A_x &=& \left\{n\in\mathbb{N}\ \big|\ \theta(n)\in \left(x,\pi\right)\right\}\\
        B_x &=& \left\{n\in\mathbb{N}\ \big|\ \theta(n)\in \left(\pi+x,2\pi\right)\right\}.
    \end{eqnarray}
    Now let $X = A_x \cup B_x$.  $X$ is a subset of $\mathbb{N}$, and since the Density Theorem implies that $X$ is nonempty, it has a least element; call it $d_0$, and let $X_0 = d_0\mathbb{N} = \left\{d_0 n\ \big|\ n\in\mathbb{N}\right\}$.  Note that $\delta(X_0)=\frac{1}{d_0}$.  The Prime Number Theorem implies that the natural density of $n\in\mathbb{N}$ for which $\lambda(n) =1$ equals the natural density of $m\in\mathbb{N}$ for which $\lambda(m) = -1$, and it follows that $\delta(X_0 \cap A_x) = \delta(X_0 \cap B_x)$.

    We have, again from the Density Theorem, that $X - X_0$ is not empty, so let $d_1$ be its smallest element and define $X_1 = d_1\mathbb{N}$ similarly to above, and note that PNT again implies $\delta(X_1 \cap A_x) = \delta(X_1 \cap B_x)$.  Let $m_1=\mathrm{lcm}(d_0,d_1)$ be the least common multiple, and note that if we similarly define $M_1 = m_1\mathbb{N}$ then we have that $M_1 = X_0 \cap X_1$ and that $\delta(M_1\cap A_x) = \delta(M_1 \cap B_x)$.  Since natural density of disjoint sets is additive, we have that
    \begin{eqnarray}
        \delta(X_0 \cap A_x) &=& \delta(X_0 \cap A_x \cap M_1) + \delta(X_0 \cap A_x \cap M_1^c)\\
        \delta(X_1 \cap A_x) &=& \delta(X_1 \cap A_x \cap M_1) + \delta(X_1 \cap A_x \cap M_1^c)
    \end{eqnarray}
    and likewise if we replace $A_x$ with $B_x$.   It follows that
    \begin{equation}
        \delta\left(\left(X_0 \cup X_1\right) \cap A_x\right)= \delta\left(\left(X_0 \cup X_1\right) \cap B_x\right).
    \end{equation}
    Continue in this way to let $d_2$ be the least element of $X - (X_0 \cup X_1)$, letting $X_2 = d_2\mathbb{N}$ and $m_2 = \mathrm{lcm}(m_1, d_2)$, concluding ultimately that for all $k\in\mathbb{N}$
    \begin{equation}
        \delta\left(\bigcup_k X_k \cap A_x\right) = \delta\left(\bigcup_k X_k \cap B_x\right),
    \end{equation}
    implying that $\delta(A_x) = \delta(B_x)$.  
    Now let $y\in(0,\pi)$ with $x<y$ and define $A_y,B_y,Y$ similarly to above.  Then if we define $W = X - Y$, $A=A_x - A_y$, and $B = B_x - B_y$,  we have that $\delta\left(W \cap A\right) = \delta\left(W \cap B\right)$ and so $\delta(A) = \delta(B)$ as desired.
\end{proof}
\end{coro}



\section{Convergence}

\paragraph{Remark}
$F(s)$ was introduced as a formal series, but in this section we obtain specific convergence results.  We begin by identifying values $s\in\mathbb{C}$ for which $F(s)$ is pointwise convergent.  We then generalize $F(s)$ and $w(n)$ to a family of related functions and extend the convergence results to the family $F_m(s)$, $m\in\mathbb{N}$.  Finally, after some technical preliminaries, we show that the family $F_m(s)$ converges to a well-known function, demonstrating convergence results for that function.

\begin{theorem}[Convergence of $F_N(s)$ for $\Re(s)\in\left(\frac{1}{2},1\right)$]\label{convthm}
Define
\begin{linenomath*}\begin{equation}
F_N(s) := \sum_{n=1}^{N} \frac{w(n)}{n^s},
\end{equation}\end{linenomath*} where $w(n)$ is as defined in (\ref{F1}).  Then
\begin{linenomath*}\begin{equation}\label{conv}
\lim_{N\to\infty} F_N(s) = F(s) = \sum_{n=1}^{\infty} \frac{w(n)}{n^s}
\end{equation}\end{linenomath*}
converges pointwise for all $s$ with $\Re(s)\in\left(\frac{1}{2},1\right)$.
\end{theorem}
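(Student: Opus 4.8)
The plan is to deduce convergence throughout the strip from a square-root estimate on the partial sums of $w$. Writing $\sigma:=\Re(s)$ and $W(t):=\sum_{n\le t}w(n)$, Abel summation gives
\begin{linenomath*}\begin{equation}
F_N(s)-F_M(s)=\frac{W(N)}{N^{s}}-\frac{W(M)}{M^{s}}+s\int_M^N\frac{W(t)}{t^{s+1}}\,dt ,
\end{equation}\end{linenomath*}
so $\{F_N(s)\}_N$ is Cauchy, and hence $F(s)$ converges, as soon as $W(t)=O(t^{\alpha})$ for some $\alpha<\sigma$. Since $\sigma$ ranges over $\left(\tfrac12,1\right)$, it suffices to prove the mean-value bound
\begin{linenomath*}\begin{equation}\label{meanvalgoal}
W(x)=\sum_{n\le x}w(n)=O\!\left(x^{1/2+\epsilon}\right)\qquad\text{for every }\epsilon>0,
\end{equation}\end{linenomath*}
the implied constant being allowed to depend on $\epsilon$.

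To attack (\ref{meanvalgoal}) I would exploit the structure of $w$ in two complementary ways. First, via a multiplicative factorization: by (\ref{argument}) one has $w(n)=\lambda(n)u(n)$, where $u$ is multiplicative (the product of the multiplicative functions $w$ and $\lambda$) with $u(p^k)=e^{i\psi(p^k)}$; then $u=\mathbf{1}*g$ for the multiplicative $g$ determined by $g(p^k)=e^{i\psi(p^k)}-e^{i\psi(p^{k-1})}$, and since $\psi(p^k)-\psi(p^{k-1})=\tfrac{\pi}{p^{3}G}\left(\tfrac{p-1}{p}\right)^{k-1}$ we have $|g(p^k)|\le\tfrac{\pi}{p^{3}G}$ uniformly in $k$, so $h:=\lambda g$ is multiplicative and rapidly decaying, with $\sum_n|h(n)|\,n^{-\sigma}<\infty$ for every $\sigma>-2$. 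A short check at prime powers yields $w=\lambda*h$, whence
\begin{linenomath*}\begin{equation}
W(x)=\sum_{e\le x}h(e)\,L\!\left(\tfrac{x}{e}\right),\qquad L(y):=\sum_{n\le y}\lambda(n),
\end{equation}\end{linenomath*}
and the decay of $h$ makes the $e$-summation harmless, so (\ref{meanvalgoal}) reduces to $L(y)=O(y^{1/2+\epsilon})$. Second, using the paper's density results directly: partitioning $[0,2\pi)$ into the $2M$ arcs $I_j=[j\pi/M,(j+1)\pi/M)$ one has $W(x)=\sum_{j=0}^{2M-1}\bigl(e^{ij\pi/M}+O(1/M)\bigr)N_j$ with $N_j:=\#\{n\le x:\theta(n)\in I_j\}$; pairing $I_j$ ($0\le j<M$) with its reflection $I_{j+M}=I_j+\pi$, across which the phase is multiplied by $e^{i\pi}=-1$, and invoking Lemma~\ref{nat_dense} in the form $N_j-N_{j+M}=o(x)$, one gets $W(x)=o_M(x)+O(x/M)$ and therefore $W(x)=o(x)$ on letting $M\to\infty$.

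The bound $W(x)=o(x)$ already gives convergence of $F(s)$ for $\Re(s)>1$, but the strip $\left(\tfrac12,1\right)$ needs the full strength of (\ref{meanvalgoal}), and \emph{this is the main obstacle}. Since $h$ has a (likewise rapidly decaying) multiplicative Dirichlet inverse, the identity $w=\lambda*h$ can be run backwards, so (\ref{meanvalgoal}) is in fact \emph{equivalent} to $L(y)=O(y^{1/2+\epsilon})$, i.e.\ to the Riemann Hypothesis; equivalently, Lemma~\ref{nat_dense} would have to be sharpened from $N_j-N_{j+M}=o(x)$ to an effective statement in which the paired arc-counts agree within $O(x^{1/2+\epsilon})$ \emph{uniformly} in $j$, with the arc-width $1/M$ permitted to shrink like a fixed power of $x$. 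The route I would pursue is precisely this effectivization: bound the discrepancy of the points $\theta(n)/(2\pi)$ on $\mathbb{R}/\mathbb{Z}$ via the Erd\H{o}s--Tur\'an inequality in terms of the Weyl sums $\sum_{n\le x}w(n)^{r}$, $r\in\mathbb{Z}\setminus\{0\}$, which are partial sums of Dirichlet series carrying Euler products $\prod_p\bigl(\sum_{k\ge0}w(p^k)^{r}p^{-ks}\bigr)$ structurally identical to that of $F(s)$, and analyse those local factors — they differ from $\tfrac{1}{1+p^{-s}}$ by corrections of size $O(p^{-2-\sigma})$ — to extract a power saving. Carrying this through is where essentially all the difficulty resides; once (\ref{meanvalgoal}) is secured, the theorem follows from the Abel-summation step above.
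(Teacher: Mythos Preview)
Your proposal does not prove the theorem. You correctly reduce convergence on the strip to the partial-sum estimate $W(x)=O(x^{1/2+\epsilon})$ via Abel summation, and your convolution identity $w=\lambda*h$ with $|h(p^k)|\le\pi/(Gp^{3})$ is valid; from it you deduce, also correctly, that the required estimate is \emph{equivalent} to $L(y)=\sum_{n\le y}\lambda(n)=O(y^{1/2+\epsilon})$, which is the Riemann Hypothesis. At that point you stop, calling the remaining step ``the main obstacle'' and ``where essentially all the difficulty resides,'' and sketching an Erd\H{o}s--Tur\'an route you do not carry out. A reduction of the theorem to RH, followed by an unfulfilled promise to prove RH, is not a proof of the theorem; the gap is precisely the missing square-root bound, and your own equivalence shows that nothing short of it will do.

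The paper does not pass through $\lambda$ at all. After the same reduction to $\bigl|\sum_{n\le N}w(n)\bigr|\ll N^{\alpha}$, it writes $N=JK+R_{N}$ with $J=\lfloor N^{\alpha}\rfloor$ and $K=\lfloor N^{1-\alpha}\rfloor$, orders the $JK$ arguments $\theta(n)$ for $R_{N}<n\le N$ as $\theta_{1,1}<\cdots<\theta_{J,K}$, bounds the full sum by $K\cdot\max_{k}\bigl|\sum_{j=1}^{J}e^{i\theta_{j,k}}\bigr|$, and then interprets $\tfrac{2\pi}{J}\sum_{j}e^{i\theta_{j,k}}$ as a Riemann sum for $\int_{0}^{2\pi K}e^{i\theta}\,d\theta=0$, invoking the Density Lemma to drive the mesh to zero and Lemma~\ref{nat_dense} to make the paired arcs $I$ and $I+\pi$ carry equal weight. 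That equidistribution/Riemann-sum argument is the entire substance of the paper's proof, and your proposal neither reproduces it nor replaces it with an alternative that is actually executed. Your own diagnosis is apt: Lemma~\ref{nat_dense} as stated gives only $N_{j}-N_{j+M}=o(x)$, and what your Erd\H{o}s--Tur\'an plan would need is the quantitative upgrade to $O(x^{1/2+\epsilon})$ uniformly over shrinking arcs --- but you do not supply it.
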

\begin{proof}
    We will take advantage of the following well-known fact:
\begin{quote}
\textbf{Proposition 1.7.7, p43 in \cite{o._2007}}
Let $a(n)$ be a sequence, and define $A(x) = \sum_{n\leq x} a(n)$.  If $|A(x)| \leq Mx^\alpha$ for all $x \geq 1$, where $\alpha \geq 0$, then [the Dirichlet series] $\sum_{n=1}^\infty \frac{a(n)}{n^s}$ is convergent for all $s=\sigma + it$ with $\sigma>\alpha$.\\
\end{quote}

We will show that there exists an $N_0$ and $M$ such that for all $N>N_0$, $\big|\sum_{n=1}^N w(n)\big| < MN^\alpha$ for $\alpha \in \left(\frac{1}{2},1\right)$.%
    \footnote{Restricting $\alpha$ to $\left(\frac{1}{2},1\right)$ may at this point appear somewhat arbitrary, but it is directly related to the limitation in our claim, which  asserts only the convergence of $F(s)$ for $\Re(s)\in\left(\frac{1}{2},1\right)$.  The wisdom of this restriction will hopefully be made clear to the reader by the end of the paper.}  
With the proposition above, this will demonstrate the convergence desired.  

Fix $\alpha \in \left(\frac{1}{2},1\right)$, and for each $N \in \mathbb{N}$ define $J=\lfloor N^\alpha \rfloor$ and $K=\lfloor N / \lfloor N^{\alpha}\rfloor\rfloor$.  Then $N=JK+R_N$, with $0 \leq R_N<J$.  With these assumptions, we have 
\begin{equation}
    0< 1-\alpha < \frac{1}{2} < \alpha < 1,
\end{equation}and hence
\begin{equation}\label{implications_of_defs_JK}
    1 \leq \underbrace{\lfloor N^{1-\alpha}\rfloor}_{K} < N^{1-\alpha} < N^{\frac{1}{2}}
      \leq \underbrace{\lfloor N^{\alpha}\rfloor}_{J} < N,
\end{equation}
noting however that the two $\leq$'s may achieve equality only possibly for small $N$.\footnote{The reader may wonder about the justification for these definitions of $J$ and $K$.  In Section \ref{distribution}, we explored the structure of $w(n)$ and, in particular, the distribution of arguments of $w(n)$ in $[0,2\pi)$.  Ultimately, the purpose of these definitions for $J,K$ will be to demonstrate that we can capture significant information about that distribution by examining only $J$ appropriately chosen points out of $N$.}

Let
\begin{equation}
    \Theta_N = \{\arg(w(n)) \in [0,2\pi), n\leq N\}
\end{equation}be the set of principal arguments of $w(n)$ with $n\leq N$, as in the proof of Corollary \ref{subset_dense}, and let $\Theta_N^* = \Theta_N - \Theta_{R_N}$ be the set after removing the arguments of $w(n)$ for $n\leq R_N$.  Order the $JK$ elements of $\Theta_N^*$ such that 
\begin{linenomath*}\begin{equation}\label{orderthetas}\begin{split}
    \Theta_N^*:\hfill 
        0\leq \overbrace{\theta_{1,1} < \theta_{1,2} < \cdots < \theta_{1,K}}^{j=1} < 
            &\cdots < \overbrace{\theta_{j,1} < \theta_{j,2} < \cdots < \theta_{j,K}}^j < \cdots \\
            &\cdots < \overbrace{\theta_{J,1} < \theta_{J,2} < \cdots < \theta_{J,K}}^{j=J} < 2\pi. 
\end{split}\end{equation}\end{linenomath*}
Letting $N^* = \left|\Theta_N^*\right|= JK$, we have 
\begin{linenomath*}\begin{equation}
\sum_{n=R_N+1}^{R_N + N^*} w(n) = \sum^J\sum^K e^{i\theta_{j,k}} = \sum^K\sum^J e^{i\theta_{j,k}}.
\end{equation}\end{linenomath*}
Therefore,
\begin{linenomath*}\begin{align}
    \Bigg|\sum_{n=R_N+1}^{R_N + N^*} w(n)\Bigg| & \leq 
        \Bigg|\sum^J e^{i\theta_{j,1}}\Bigg| + \cdots + \Bigg|\sum^J e^{i\theta_{j,K}}\Bigg|\label{sep_sums} \\
    &  \leq K \cdot \max_{1\leq k \leq K} \Bigg|\sum^J e^{i\theta_{j,k}}\Bigg|,
\end{align}\end{linenomath*}
where by $\max_{1\leq k \leq K}$ we are selecting $k$ to be the value which yields the maximum among the sums on the right of (\ref{sep_sums}). 
For such $k$, redefine $\theta_j := \theta_{j,k}$ so that 
\begin{linenomath*}
    \begin{equation}\label{choose_one_k}
        \Big|\sum^J e^{i\theta_{j}}\Big| = \max_{1\leq k \leq K} \Big|\sum^J e^{i\theta_{j,k}}\Big|,
    \end{equation}
\end{linenomath*} and we have 
\begin{linenomath*}\begin{equation}\label{newbound}
\Bigg|\sum_{n=R_N+1}^{R_N + N^*} w(n)\Bigg| \leq K\Bigg|\sum^J e^{i\theta_{j}}\Bigg| = \Bigg|\sum^J e^{i\theta_{j}}K\Bigg|.
\end{equation}\end{linenomath*}
Note that the selection of $k$ which maximizes the sum on the right of (\ref{sep_sums}) is dependent on $N$, and so for each $N$, we have a well-defined\footnote{%
    It is of course possible, though unlikely for large $N$, that multiple values $k$ may yield an equivalent maximum, in which case any may be chosen without affecting the argument.}
collection ${C_N = \{\theta_j\ \big|\ j=1,\cdots,J\}_N \subset \Theta_N^*}$, the elements of which we expect to vary, perhaps substantially, for various $N$.  However, for any $N$, (\ref{newbound}) holds, and it follows that 
\begin{linenomath*}\begin{equation}\label{mult_by_2pi}
\frac{2\pi}{J}\Bigg|\sum_{n=R_N+1}^{R_N + N^*} w(n)\Bigg| \leq \frac{2\pi}{J}\Bigg|\sum^J e^{i\theta_{j}}K\Bigg| = \Bigg|\sum^J e^{i\theta_{j}}\frac{K2\pi}{J}\Bigg|.
\end{equation}\end{linenomath*}

Consider the partition of $[0, 2\pi)$ induced by $C_N$.  Note that while well-defined, the specific values of the elements of $C_N$ are unknown and depend on the prime factorizations of the $J$ numbers $n$ (with $R_N < n \leq N$) underlying the arguments that form $C_N$.  Nevertheless, Hughes' Corollary guarantees that the norm of this partition goes to zero as $N$ goes to infinity.  That is, for sufficiently large $N$, the elements $\theta_j \in C_N$ may ``bunch up'' but there will not be any gaps.

We will now extend the partition $C_N$ to cover the interval $[0,K2\pi)$, essentially by adding multiples of $2\pi$ to the elements and reordering and reindexing them, which changes the expressions of their arguments but not the locations in the unit circle of each corresponding $w(n)$.  First, replace each of the $\theta_j$ with $\{\theta_j, \theta_j+2\pi, \theta_j+4\pi, \cdots, \theta_j + (K-1)2\pi\}$.  We now have a collection
\begin{linenomath*}
\begin{equation}\begin{split}\label{spread_to_K_ints}
    0 \leq \theta_1 < \cdots < \theta_J &< \theta_1 +2\pi < \cdots < \theta_J + 2\pi  < \cdots \\
   &< \theta_1 + (K-1)2\pi < \cdots < \theta_J + (K-1)2\pi < K2\pi.
\end{split}\end{equation}\end{linenomath*}
Now, choose $\theta_1^*$ from among the first $K$ values, $\theta_2^*$ from among the next $K$, and so forth, in such a way that the set $\{e^{i\theta_j^*}\} = \{e^{i\theta_j}\}$.  Recall that $1 < K < N^{1/2} < J < N$, so eventually the ratio $J/K$ will exceed any positive number.  This selection process thus allocates to each interval\footnote{%
    We pedantically include the right endpoints of each of these intervals since it is possible that we might sometimes have $JK=N$ and thus $R_N=0$, so that $w(n) = 1$ contributes its argument $0$ to $\Theta_N^*$.} 
\begin{linenomath*}
\begin{equation}\label{intervals}
    [0,2\pi),\ [2\pi, 4\pi),\  \ldots,\ \left[(K-1)2\pi, K2\pi\right)
\end{equation}\end{linenomath*}
approximately $\lfloor J/K \rfloor$ arguments $\theta_j^*$ in such a way that the rearranged elements $\theta_j^*$ are spread out in each interval in (\ref{intervals}) and results in 
\begin{linenomath*}
    \begin{equation}\label{theta_jstar_spread}
        0 < \theta_1^* < \theta_2^* < \theta_3^* < \cdots < \theta_J^* < 2K\pi,
    \end{equation}
\end{linenomath*}
a partition of $[0,2K\pi)$.  

Of course, arguments $\{\theta_j\}$ and $\{\theta_j^*\}$ obtain the same points $w(n)$ and differ only by multiples of $2\pi$, so after rearranging and reindexing, we have that
 \begin{linenomath*}\begin{equation}\label{new_sum_jstars}
 \Bigg|\sum^J e^{i\theta_{j}}\frac{K2\pi}{J}\Bigg| =
 \Bigg|\sum^J e^{i\theta_{j}^*}\frac{K2\pi}{J}\Bigg|.
 \end{equation}\end{linenomath*}
The sum on the right of (\ref{new_sum_jstars}) is similar to a Riemann sum approximating $I = \int_0^{K2\pi} e^{i\theta}d\theta$.  If each $\theta_{j}^*$ were in its associated interval of width $\frac{K2\pi}{J}$, that is, if for each $1\leq j \leq J$, we had that $(j-1)\frac{K2\pi}{J} < \theta_j^* < j\frac{K2\pi}{J}$, then  we would already have a Riemann sum, but we cannot assert that this condition is fulfilled.  However, what we can show is that the norm of the partition of $[0,K2\pi)$ naturally induced by the set $\{\theta_j^*\}$ goes to zero, providing a true Riemann sum which converges to the indicated integral, and that in the limit the sum on the right of (\ref{new_sum_jstars}) converges to that Riemann sum.   

First, note that 
\begin{linenomath*}\begin{equation}\label{ratioKJ}
\frac{N^{1-\alpha}}{N^\alpha} = \frac{N}{N^{2\alpha}} = \frac{1}{N^{2\alpha - 1}},
\end{equation}\end{linenomath*}
and $2\alpha - 1>0$ since $\alpha > \frac{1}{2}$.  Therefore, as $N\to\infty$, $\frac{N^{1-\alpha}}{N^\alpha} \to 0$.  Also, since $K\leq \lceil N^{1-\alpha}\rceil$, $\frac{K2\pi}{N^\alpha}\to 0$.  Hence, although the total  length of the interval $[0,K2\pi)$ is going to infinity, the average part width still vanishes.

Furthermore, if $\Delta\theta_J$ is the norm of the partition of $[0,K2\pi)$ naturally induced by $\{\theta_j^*\}$ for any $J = \lfloor N^\alpha \rfloor$, then we have from Hughes' Corollary that  $\Delta\theta_J$ goes to zero for sufficiently large $N$.  Indeed, if we let $\epsilon>0$, we can choose $M$ sufficiently large that $\frac{\pi}{M}<\frac{\epsilon}{2}$, and let $\epsilon_0 = \frac{\pi}{M}$.  Partition $[0,\pi)$ and $(\pi, 2\pi)$ each into $M$ parts of width $\epsilon_0$.  Then, by Hughes' Corollary, we can choose $N_0$ sufficiently large such that for all $N>N_0$, there are at least $K^2$ elements of $\Theta_N$ in each part.  The selection of $\theta_j$ in (\ref{newbound}), in which we end up with one out of every $K$ points, leaves at least $K$ elements $\theta_j$ in each part.  If a given part is $(a, a+\epsilon_0)$, the rearrangement of its (at least) $K$ elements $\theta_j$ with added multiples of $2\pi$, spread out in the interval $[0,2K\pi)$, means there will be at least one element $\theta_j^*$ in each new part $(a+2\pi k, a+2\pi k+\epsilon_0)$ for all $k=1,2,\ldots,K-1$.  That is, there will be at least one element $\theta_j^*$ somewhere in each part of width $\epsilon_0 < \frac{\epsilon}{2}$ throughout the entire interval $[0, 2K\pi)$, and so the maximum distance between any two points will be less than $\epsilon$.  In other words, $\Delta_J < \epsilon$, and since $\epsilon>0$ can be arbitrarily small, we have $\Delta_J \to 0$ as $N\to\infty$, as desired.  Letting $\Delta_j$ as usual be the width of the $j$th part of the induced partition, it follows that
\begin{linenomath*}
    \begin{equation}\label{zero_norm_gives_integral}
         \lim_{N\to\infty}\Bigg|\sum^J e^{i\theta_j^*} \Delta\theta_j\Bigg| 
  = \lim_{K\to\infty}\left|\int_0^{K2\pi} e^{i\theta}d\theta\right| \\
 = 0
    \end{equation}
\end{linenomath*}

From (\ref{mult_by_2pi}), (\ref{new_sum_jstars}), and (\ref{zero_norm_gives_integral}), and since $J\leq N^\alpha$, we have that
\begin{linenomath*}
\begin{align}\label{to_integral_line_1}
\lim_{N\to\infty} \frac{2\pi}{N^\alpha}\Bigg|\sum_{n=R_N+1}^{n=R_N + N^*} w(n)\Bigg| 
 & \leq \lim_{N\to\infty} \Bigg|\sum^J e^{i\theta_j^*} \frac{K2\pi}{N^\alpha}\Bigg| \\
 &\leq \lim_{N\to\infty}\Bigg|\sum^J e^{i\theta_j^*} \frac{K2\pi}{J}\Bigg|\label{avg_parts} \\
 &=\lim_{N\to\infty}\Bigg|\sum^J e^{i\theta_j^*} \Delta\theta_j\Bigg| \label{induced_parts}\\
 & = \lim_{K\to\infty}\left|\int_0^{K2\pi} e^{i\theta}d\theta\right| \label{the_long_integral}\\
 &= 0.
\end{align}
\end{linenomath*}

The equality between (\ref{avg_parts}) and (\ref{induced_parts}) can be obtained by partitioning the interval $(0,\pi)$, repeating the partition for $(0+k\pi, \pi+k\pi)$ for $k$ in $\{0, \ldots, 2K-1\}$, and breaking the sum in (\ref{avg_parts}) into a series of sums over each part, noting that each part is an interval on diametrically opposite sides of the unit circle.  Specifically, if $(x,y)\subset(0,\pi)$ is a given part, and if we let
\begin{eqnarray}
    A &=& \left\{\theta\in(x,y)\ \big|\ \theta \equiv\theta_j^*\pmod{2\pi}\right\}\\
    B &=& \left\{\theta\in(x+\pi,y+\pi)\ \big|\ \theta \equiv\theta_j^*\pmod{2\pi}\right\},
\end{eqnarray}
then by Corollary \ref{nat_dense}, the natural density of the elements in each such set of diametrically opposite parts $A,B$ is identical; that is, $\delta(A) = \delta(B)$, and therefore any error induced by multiplying by the average part widths $\frac{K2\pi}{J}$, as opposed to the natural part widths induced by (\ref{theta_jstar_spread}), is likewise identical across matching parts and therefore cancels, so if we restrict the sum in (\ref{avg_parts}) to only those $\theta_j^*$ congruent to the elements in $A\cup B$, then that sum converges to the Riemann sum in (\ref{induced_parts}) if it is likewise restricted, and the sum of all such restricted sums for each part $(x,y)$ yields (\ref{induced_parts}).

We have shown that if we remove the first $R_N$ points from consideration, then the magnitude of the sum of the remaining $N^*$ points is $O(N^\alpha)$ (We actually showed a stronger condition but it suffices that the sum is $O(N^\alpha)$).  We will next show that the magnitude of the sum of the first $R_N$ points is also $O(N^\alpha)$, in which case, by the Proposition mentioned above, the Dirichlet series $F(s)$ converges for $\Re(s) \in \left(\frac{1}{2}, 1\right)$.  In fact, since $R_N < J \leq N^\alpha$ for all $N$, we have that 
\begin{linenomath*}
    \begin{equation}
        \left|\sum_{n=1}^{R_N} w(n)\right| \leq \sum_{n=1}^{R_N} \left|w(n)\right| = R_N < N^\alpha.
    \end{equation}
\end{linenomath*}

It follows that the magnitude of the sum of the first $R_N$ points is indeed $O(N^\alpha)$, and so with the previous result, we have that for all
 $\alpha\in\left(\frac{1}{2},1\right)$ there exist $M,N_0$ such that for all $N>N_0$ 
\begin{linenomath*}\begin{equation}
\Bigg|\sum_{n=1}^N w(n)\Bigg| \leq \frac{M}{2\pi}N^\alpha.
\end{equation}\end{linenomath*}
Therefore, by Proposition 1.1.7 from \cite{o._2007}, referenced previously, we have thus shown the pointwise convergence of $F_N(s)$ to $F(s)$ for all $s$ with $\Re(s)\in \left(\frac{1}{2}, 1\right)$. This completes the proof of Theorem \ref{convthm}.
\end{proof}


\section{Generalization of $F_N$ to $F_{m,N}$}

Let us now extend our definition of $F$ to a parametrized family of Dirichlet series $F_{m,N}(s)$ with coefficients $w_m(n)$ as follows:
\begin{linenomath*}\begin{equation}\label{Fm}
F_{m,N}(s) := \sum_{n=1}^{N} \frac{w_m(n)}{n^s},
\end{equation}\end{linenomath*}
where $w_m(n) = e^{i\theta_m(n)}$, and 
\begin{linenomath*}\begin{equation}\label{wmn}
\theta_m(n) = \frac{\pi}{2}\left(1-\lambda(n)\right) +
   \sum_{j=1}^{J} \psi_m(p_j^{k_j}),
\end{equation}\end{linenomath*}
where we define $\psi_m(p^{k})$ to be 
\begin{linenomath*}\begin{equation}\label{psim}
\psi_{m}(p^k) = \frac{1}{m}\psi(p^k) = \frac{\pi}{mp^2G}\left(1 - \left(\frac{p-1}{p}\right)^k\right),
\end{equation}\end{linenomath*}
again letting $n=p_1^{k_1}p_2^{k_2}\cdots p_J^{k_J}$ be the prime factorization of $n$.  We note that $F(s),w(n)$ defined earlier are thus the special cases $F_1(s),w_1(n)$.

We state first an important corollary to Theorem \ref{density}, omitting the proof, which is substantially similar to the proof of Theorem \ref{density}.
\begin{coro}[Density Theorem Redux]\label{dens_redux}
Given $m\in\mathbb{N}$, the arguments 
\begin{linenomath*}
    \begin{equation}
        A=\{\theta\ \big|\ \theta=\arg(w_m(n)), n\in\mathbb{N}\}
    \end{equation}
\end{linenomath*} for $w_m(n)$ as defined above are dense in the set $A_m=\left[0,\frac{\pi}{m}\right) \cup \left(\pi,\pi+\frac{\pi}{m}\right)$.
\end{coro}



We now demonstrate a crucial fact about $F_{m,N}(s)$.

\begin{theorem}[$F_{m,N}(s)$ converges uniformly in $m$]\label{uniform}
For fixed $s\in\mathbb{C}$ with $\Re(s)\in\left(\frac{1}{2},1\right)$,

\begin{linenomath*}\begin{equation}
F_m(s):= \lim_{N\to\infty} F_{m,N}(s) = \lim_{N\to\infty} \sum_{n=1}^N \frac{w_m(n)}{n^s} = \sum_{n=1}^\infty \frac{w_m(n)}{n^s}
\end{equation}\end{linenomath*}
converges uniformly in $m\in\mathbb{N}$.
\end{theorem}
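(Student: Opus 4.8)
The plan is to reduce the statement to the uniform Cauchy criterion, the essential input being a partial‑sum bound that is uniform in $m$: for any fixed $\alpha\in(\tfrac12,1)$ there are $M>0$ and $N_0\in\mathbb N$, \emph{independent of $m$}, with $\bigl|\sum_{n=1}^{N}w_m(n)\bigr|\le M\,N^{\alpha}$ for all $N\ge N_0$ and all $m\in\mathbb N$. Granting this, fix $s$ with $\sigma:=\Re(s)\in(\tfrac12,1)$, pick $\alpha\in(\tfrac12,\sigma)$, and for $N_0\le N_1<N_2$ apply Abel summation to $\sum_{N_1<n\le N_2}w_m(n)n^{-s}$, using $\bigl|n^{-s}-(n+1)^{-s}\bigr|\le|s|\,n^{-\sigma-1}$, to obtain
\begin{equation}
\Bigl|\sum_{n=N_1+1}^{N_2}\frac{w_m(n)}{n^{s}}\Bigr|\;\le\;\frac{M\,N_2^{\alpha}}{N_2^{\sigma}}+\frac{M\,N_1^{\alpha}}{N_1^{\sigma}}+|s|\,M\!\!\sum_{n>N_1}n^{\alpha-\sigma-1}\;\le\;C(s,\alpha)\,M\,N_1^{\alpha-\sigma},
\end{equation}
where $C(s,\alpha)$ does not involve $m$. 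Since $\alpha-\sigma<0$ the right‑hand side tends to $0$ as $N_1\to\infty$ uniformly in $m$, so $N\mapsto F_{m,N}(s)$ is uniformly Cauchy in $m$; this gives both the existence of each $F_m(s)$ and the asserted uniform convergence.

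Thus the real content is the uniform partial‑sum bound, and the strategy is to rerun the proof of Theorem~\ref{convthm} with $w$ replaced by $w_m$, checking that no step degrades as $m$ varies. That proof uses only three facts about $w$: density of the arguments (Lemma~\ref{density}, now replaced by Lemma~\ref{dens_redux}); the antipodal equality of natural densities (Lemma~\ref{nat_dense}); and $\int_0^{2\pi K}e^{i\theta}\,d\theta=0$, which is manifestly $m$‑independent. The $w_m$‑analogue of Lemma~\ref{nat_dense} is immediate rather than a fresh argument: for an arc $(x,y)\subset(0,\tfrac{\pi}{m})$ one has $\{n:\theta_m(n)\in(x,y)\}=\{n:\theta_1(n)\in(mx,my)\}$ and $\{n:\theta_m(n)\in(x+\pi,y+\pi)\}=\{n:\theta_1(n)\in(mx+\pi,my+\pi)\}$, so Lemma~\ref{nat_dense} applied to the arc $(mx,my)\subset(0,\pi)$ gives the desired equality of densities. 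The key observation for \emph{uniformity} is the identity $\theta_m(n)-\tfrac{\pi}{2}(1-\lambda(n))=\tfrac1m\bigl(\theta_1(n)-\tfrac{\pi}{2}(1-\lambda(n))\bigr)$: for every $N$ the configuration $\{\arg w_m(n):n\le N\}$ is obtained from $\{\arg w_1(n):n\le N\}$ by contracting the upper half–circle toward $1$ and the lower half–circle toward $-1$ by the factor $1/m$. Consequently, once $N\ge N_0$ makes $\Theta_N$ fine enough for the Riemann‑sum argument at $m=1$, the rescaled configuration for every $m\ge 1$ is at least as fine; block by block the partition norm $\Delta\theta_J$ of~(\ref{integral}) is no larger than in the $m=1$ case, so the same threshold $N_0$ and constant $M$ serve for all $m$.

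The one genuinely new wrinkle, and the step I expect to require the most care, is that for $m\ge 2$ the set $A_m=[0,\tfrac\pi m)\cup(\pi,\pi+\tfrac\pi m)$ has a gap $(\tfrac\pi m,\pi)$, so when the $JK$ ordered arguments of $\Theta_N^{*}$ are grouped into $J$ blocks of $K$, exactly one block straddles this gap and has diameter close to $\pi$ rather than $o(1)$. This is harmless: deleting the single argument contributed by that block changes $\bigl|\sum^{J}e^{i\theta_j}\bigr|$ by at most $1$, hence inflates the bound $K\bigl|\sum^{J}e^{i\theta_j}\bigr|$ by at most $K\le N^{1-\alpha}+1=O(N^{\alpha})$ (using $\alpha>\tfrac12$), while the remaining $J-1$ arguments are treated exactly as in Theorem~\ref{convthm}. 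Once one has confirmed that every "as $N\to\infty$" passage in the proof of Theorem~\ref{convthm} can be taken past a single $m$‑independent threshold, and handled this isolated gap‑straddling block, the partial‑summation estimate above completes the proof with room to spare.
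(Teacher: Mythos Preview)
Your proposal follows essentially the same strategy as the paper---obtain an $m$-uniform bound $\bigl|\sum_{n\le N}w_m(n)\bigr|\le MN^{\alpha}$ by rerunning the Riemann-sum argument of Theorem~\ref{convthm} and exploiting the fact that the $\theta_m$ configuration is a $1/m$ contraction of the $\theta_1$ configuration---but the execution differs in two respects worth noting. First, you make the passage from the partial-sum bound to uniform convergence of the Dirichlet series explicit via Abel summation with constants tracked in $m$; the paper simply invokes the cited Proposition without revisiting whether the implied constants there are $m$-independent, so your version is more careful on this point. Second, the paper does not argue directly from the rescaling identity as you do; instead it introduces an auxiliary set $\Psi_{m,N}=\Theta_N^{*}\cap A_m$ (those $w_1$-arguments that already happen to lie in $A_m$), argues that the Riemann sum restricted to $\Psi_{m,N}$ satisfies the same $\epsilon$-bound at the same threshold $N_0$ because the excised arcs cancel in the target integral, and then observes that the full $\Theta_{m,N}$ induces a finer partition of $A_m$ than $\Psi_{m,N}$ does. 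This indicator-function intermediary sidesteps the gap-straddling block you isolate and handle by hand; conversely, your direct rescaling argument avoids the paper's somewhat delicate claim that deleting the points outside $A_m$ contributes zero error. Both routes rest on the same fineness heuristic and inherit whatever rigor the proof of Theorem~\ref{convthm} actually supplies.
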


\begin{proof}

The convergence of $F_{m,N}(s)$ with $m>1$ follows a similar argument to Theorem \ref{convthm}.  We note first that the difference between $F_{1,N}$ and $F_{m,N}$ is exactly that the coefficients $w_1(n)$ are scaled to be $w_m(n)$ with arguments in the interval $\left[0, \frac{\pi}{m}\right)$ when $\lambda(n)=1$ or $\left(\pi, \pi+\frac{\pi}{m}\right)$ when $\lambda(n)=-1$. Thus, proof of convergence is accomplished by replacing the integral $\int_0^{K2\pi} e^{i\theta}d\theta$ in (\ref{the_long_integral}) with $\int_0^{K2\pi}\delta_\theta e^{i\theta}d\theta$, where $\delta_\theta$ is an indicator function defined to be 1 when $\theta \in A_m$ and 0 otherwise, here letting
\begin{linenomath*}\begin{equation}\label{Am}
A_m = \bigcup_{a=0}^\infty\left[0+2\pi a, \frac{\pi}{m}+2\pi a\right) \cup \Big(\pi+2\pi a, \pi + \frac{\pi}{m}+2\pi a\Big),
\end{equation}\end{linenomath*}
so that the integrand is nonzero only on the intervals where the arguments of the $w_m(n)$ may exist. 
The convergence of each $F_{m,N}(s)$ to $F_m(s)$ for $\Re(s)\in\left(\frac{1}{2}, 1\right)$ then follows from the argument set forth in Theorem \ref{convthm}.  

It remains to be seen that this convergence is uniform in $m\in\mathbb{N}$.  Defining $J$ and $K$ as in the proof of Theorem \ref{convthm}, we had from (\ref{to_integral_line_1}) that
\begin{linenomath*}\begin{equation}\label{rsum}
   \lim_{N\to\infty} \frac{2\pi}{N^\alpha}\Bigg|\sum_{n\in\Theta^*_N} w_1(n)\Bigg|\leq
    \lim_{N\to\infty} \Bigg|\sum_{j=1}^J e^{i\theta_j^*} \frac{K2\pi}{N^\alpha}\Bigg|,
\end{equation}\end{linenomath*}
and we established that the  sum on the right in (\ref{rsum}) converges to a Riemann sum which itself converges to zero.  Hence, for any $\epsilon>0$, there exists $N_0(\epsilon)$ such that for all $N>N_0(\epsilon)$,
\begin{linenomath*}\begin{equation}\label{Jsum}
\Bigg|\sum_{j=1}^J e^{i\theta_j^*} \frac{K2\pi}{N^\alpha}\Bigg|
< \epsilon.
\end{equation}\end{linenomath*} 
Let $N>N_0$ and consider the set $\Theta_N^*$ of arguments of $w_1(n)$ for $R_N < n\leq N$ as before.%
    \footnote{We note that the proof of the corresponding result for $F_{m,N}$ regarding the first $R_N$ points from the latter part of the proof of Theorem \ref{convthm} follows the same argument as in that proof, and so we will consider here only the convergence to zero of the magnitude of the sum of the $JK$-many points in $\Theta_N^*$.}
For any $m\in\mathbb{N}$, $m>1$, let $\Psi_{m,N}$ be the elements of $\Theta_N^*$ found in the intervals $A_m$ as defined  in (\ref{Am}), so $\Psi_{m,N} = \Theta_N^* \cap A_m$. Then if we restrict the sum in (\ref{Jsum}) to include only those $\theta_j^* \in \Psi_{m,N}$, we have now 
\begin{linenomath*}\begin{equation}\label{Jsumrestricted}
\Bigg|\sum_{\theta_j^* \in \Psi_{m,N}} e^{i\theta_j^*} \frac{K2\pi}{N^\alpha}\Bigg|
< \epsilon,
\end{equation}\end{linenomath*}
since this last sum, by the same argument as previously, is also a Riemann sum, which in this case converges to 
\begin{linenomath*}\begin{equation}
\int_0^{K2\pi}\delta_\theta e^{i\theta}d\theta = 0,
\end{equation}\end{linenomath*}
with $\delta_\theta$ as defined above.  That the sum in (\ref{Jsumrestricted}) will be less than $\epsilon$ for all $N$ greater than the \emph{same} $N_0(\epsilon)$ is due to the fact that the intervals of the integral zeroed out by the indicator function $\delta_\theta$, namely where $\theta \notin A_m$,  would otherwise exactly cancel each other in the integral.  Removing the points $\theta^*_j \notin \Psi_{m,N}$ has the effect of being a perfectly convergent Riemann sum for these intervals, so that if the total error of the  Riemann sum in (\ref{Jsum}) for a given $N > N_0(\epsilon)$ were $\epsilon = \epsilon_m + \epsilon_0$, with $\epsilon_0$ being the error from the $\theta_j^* \notin \Psi_{m,N}$, then the sum in ($\ref{Jsumrestricted}$) would effectively have $\epsilon_0=0$ and the result follows.

Now, let the norm of the partition of $A_m \cap \left[0,K2\pi\right)$ induced by $\theta_j^* \in \Psi_{m,N}$ in (\ref{Jsumrestricted}) be $\Delta_{1,N}$, and let $\Delta_{m,N}$ be the norm of the partition induced by the corresponding $\theta_j^*$ from $F_{m,N}$.  Note that these are precisely the arguments of the $J$ points $w_1(n)$ chosen in (\ref{sep_sums}), only scaled, since $\theta_m(n)\equiv\frac{1}{m}\theta_1(n)\pmod{\pi}$ and so $\theta_m(n)\in A_m$ for all $n$, before being adjusted to be $\theta_j^*$ as in (\ref{theta_jstar_spread}) in the proof of Theorem \ref{convthm}.  Call this set of points $\Theta_{m,N}$, and observe that $\Delta_{m,N} < \Delta_{1,N}$.  Therefore, the partition induced by $\Theta_{m,N}$ is finer than the partition induced by $\Psi_{m,N}$, and since both converge to $\int_0^{K2\pi}\delta_\theta e^{i\theta}d\theta = 0$, we must have that
\begin{linenomath*}\begin{equation}\label{uni_sums}
\Bigg|\sum_{\theta_j^* \in \Theta_{m,N}} e^{i\theta_j^*} \frac{K2\pi}{N^\alpha}\Bigg|
\leq \Bigg|\sum_{\theta_j^* \in \Psi_{m,N}} e^{i\theta_j^*} \frac{K2\pi}{N^\alpha}\Bigg|
< \epsilon.
\end{equation}\end{linenomath*}
Note that our choice of $m>1$ was arbitrary and that (\ref{uni_sums}) holds for all $N>N_0$, where $N_0(\epsilon)$ depends only on $\epsilon$ and not on our choice of $m$.    Hence,
\begin{linenomath*}\begin{equation}
\lim_{N\to\infty} F_{m,N}(s) = F_m(s)
\end{equation}\end{linenomath*} 
converges uniformly in $m$, as desired.
\end{proof}

It remains to be seen that $F_m(s)$ converges to a limit function as $m\to\infty$, for which we need the following result.

\begin{theorem}\label{cauchy}
    $F_m(s)$ is Cauchy.
\end{theorem}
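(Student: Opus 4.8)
The plan is to show that $\bigl(F_m(s)\bigr)_{m\in\mathbb{N}}$ — a genuine sequence in $\mathbb{C}$, since each $F_m(s)$ is well-defined by Theorem \ref{uniform} — is a Cauchy sequence, by splitting each difference $F_m(s)-F_{m'}(s)=\sum_{n=1}^{\infty}\frac{w_m(n)-w_{m'}(n)}{n^s}$ at a cutoff $N$ into a finite head $\sum_{n\le N}$ and an infinite tail $\sum_{n>N}$. The tail will be controlled using the fact, from Theorem \ref{uniform}, that $F_{m,N}(s)\to F_m(s)$ uniformly in $m$, while the head — once $N$ is fixed — will be controlled by a crude pointwise estimate on $w_m(n)-w_{m'}(n)$ that is small when $m,m'$ are large.

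The key pointwise estimate is the following. Since $w_m(n)=e^{i\theta_m(n)}$ with $\theta_m(n)=\tfrac{\pi}{2}(1-\lambda(n))+\tfrac1m\sum_{j=1}^{J}\psi\bigl(p_j^{k_j}\bigr)$, the Liouville terms cancel in the difference $\theta_m(n)-\theta_{m'}(n)$, and using $|e^{ia}-e^{ib}|\le|a-b|$ together with the bound $\sum_{j=1}^{J}\psi(p_j^{k_j})<\pi$ — valid for every $n=p_1^{k_1}\cdots p_J^{k_J}\in\mathbb{N}$ and already established inside the proof of Lemma \ref{properties} — one obtains, uniformly in $n$,
\begin{linenomath*}\begin{equation}
\bigl|w_m(n)-w_{m'}(n)\bigr|\le\bigl|\theta_m(n)-\theta_{m'}(n)\bigr|=\Bigl|\tfrac1m-\tfrac1{m'}\Bigr|\sum_{j=1}^{J}\psi\bigl(p_j^{k_j}\bigr)<\frac{\pi}{\min(m,m')}.
\end{equation}\end{linenomath*}

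Now fix $\epsilon>0$ and write $\sigma=\Re(s)\in\left(\tfrac12,1\right)$. By Theorem \ref{uniform} there is $N_0=N_0(\epsilon)$, \emph{independent of $m$}, such that for every $N>N_0$ and every $m$ one has $\bigl|\sum_{n>N}w_m(n)n^{-s}\bigr|=\bigl|F_m(s)-F_{m,N}(s)\bigr|<\epsilon/4$; fixing any such $N$, the triangle inequality gives $\bigl|\sum_{n>N}(w_m(n)-w_{m'}(n))n^{-s}\bigr|<\epsilon/2$ for all $m,m'$. For the head, which is now a fixed finite sum, the pointwise estimate yields
\begin{linenomath*}\begin{equation}
\Bigg|\sum_{n=1}^{N}\frac{w_m(n)-w_{m'}(n)}{n^{s}}\Bigg|\le\frac{\pi}{\min(m,m')}\sum_{n=1}^{N}\frac{1}{n^{\sigma}}\le\frac{\pi\,C_\sigma\,N^{1-\sigma}}{\min(m,m')},
\end{equation}\end{linenomath*}
for a constant $C_\sigma$ depending only on $\sigma$. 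Since $N$ and $\sigma$ are now fixed, there is $M_0$ with this quantity $<\epsilon/2$ whenever $m,m'>M_0$; adding the head and tail estimates gives $|F_m(s)-F_{m'}(s)|<\epsilon$ for all $m,m'>M_0$, which is exactly the Cauchy condition.

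The subtle point — and the reason Theorem \ref{uniform} is needed rather than merely the pointwise decay of the coefficients — is the order of the quantifiers: one cannot estimate the full difference series termwise, since $\sum_n n^{-\sigma}$ diverges for $\sigma<1$, so the factor $1/\min(m,m')$ alone does not suffice. The cutoff $N$ must be chosen first, using the $m$-uniform tail bound of Theorem \ref{uniform}, and only afterward may $M_0$ be chosen depending on $N$ and $\epsilon$. Everything else is routine.
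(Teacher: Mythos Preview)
Your proof is correct and follows essentially the same approach as the paper: split $F_m(s)-F_{m'}(s)$ into a finite head and an infinite tail, control the tail via the $m$-uniform convergence of Theorem~\ref{uniform}, and control the head via the pointwise bound $|w_m(n)-w_{m'}(n)|\le|\theta_m(n)-\theta_{m'}(n)|<\pi/\min(m,m')$. The only cosmetic difference is that the paper discards the factor $n^{-\sigma}\le 1$ in the head estimate and bounds the sum crudely by $N\cdot\pi/M$, whereas you retain it and sum to $C_\sigma N^{1-\sigma}$; either way the head is a fixed finite quantity times $1/\min(m,m')$, so the argument concludes identically.
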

\begin{proof}
Fix $s\in\mathbb{C}$ such that $\Re(s) \in\left(\frac{1}{2},1\right)$ and let $\epsilon>0$.  We will show that there exists $M\in\mathbb{N}$ such that for all $m,q>M$, $\big| F_m(s) - F_q(s) \big| < \epsilon$.

Since $F_{m,N}(s)$ converges to $F_m(s)$ uniformly in $m$, there exists $N_0$ such that for all $N>N_0$ we have that $\big|F_{m,N}(s) - F_m(s)\big| < \frac{\epsilon}{4}$ for all $m$.  In particular, for any such $N$ and for any $m,q\in\mathbb{N}$, we have that
\begin{linenomath*}\begin{equation}
\begin{split}
\big|F_m(s) - F_q(s)\big| & = \big|F_m(s) - F_{m,N}(s) + F_{m,N}(s) - F_{q,N}(s) + F_{q,N}(s) - F_q(s)\big| \\
 & \leq \big|F_m(s) - F_{m,N}(s)\big|
   + \big|F_{m,N}(s) - F_{q,N}(s)\big| + \big|F_{q,N}(s) - F_q(s)\big|\\
 & < \frac{\epsilon}{4} + \big|F_{m,N}(s) - F_{q,N}(s)\big|+ \frac{\epsilon}{4}.
\end{split}
\end{equation}\end{linenomath*}
We thus seek $M\in\mathbb{N}$ such that for all $m,q>M$, ${\big|F_{m,N}(s) - F_{q,N}(s)\big|} < \frac{\epsilon}{2}$.  Now, $F_{m,N}(s) := \sum_{n=1}^N \frac{w_m(n)}{n^s}$, and likewise for $F_{q,N}(s)$, so
\begin{linenomath*}\begin{equation}
\begin{split}
    \big|F_{m,N}(s) - F_{q,N}(s)\big|
    & =  \Bigg|\sum_{n=1}^N \frac{w_m(n)}{n^s} - 
         \sum_{n=1}^N \frac{w_q(n)}{n^s}\Bigg| \\
    & =  \Bigg|\sum_{n=1}^N \left(\frac{w_m(n)-w_q(n)}{n^s}\right)\Bigg| \\
    & \leq \sum_{n=1}^N \Bigg|\frac{w_m(n)-w_q(n)}{n^s}\Bigg|.
\end{split}
\end{equation}\end{linenomath*}
However, for each $n\in 1,\ldots, N$
\begin{linenomath*}\begin{equation}
\Bigg|\frac{w_m(n)-w_q(n)}{n^s}\Bigg| = \Bigg|\frac{e^{i\theta_m(n)}-e^{i\theta_q(n)}}{n^s}\Bigg|
 \leq \big|\theta_m(n)-\theta_q(n)\big|,
\end{equation}\end{linenomath*}
if we consider the principal arguments.  Note that if $\lambda(n)=1$, $\theta_m(n) \in \left[0,\frac{\pi}{m}\right)$, and if $\lambda(n)=-1$,  $\theta_m(n)  \in\left(\pi,\pi+\frac{\pi}{m}\right)$, for all $m$, and likewise for $\theta_q(n)$ for all $q$.  Therefore, choose $M$ sufficiently large such that $\frac{\pi}{M} < \frac{\epsilon}{2N}$.  Then for all $m,q>M$ and each $n$ with $1\leq n \leq N$, we have $\big|\theta_m(n)-\theta_q(n)\big| < \frac{\pi}{M} < \frac{\epsilon}{2N}$, and therefore 
\begin{linenomath*}\begin{equation}
\sum_{n=1}^N \Bigg|\frac{w_m(n)-w_q(n)}{n^s}\Bigg| < \frac{\epsilon}{2},
\end{equation}\end{linenomath*}
as desired.  Therefore, $F_m(s)$ is Cauchy.
\end{proof}

We are nearly complete, and for our final result we will use the following theorem.

\begin{theorem}[Theorem 2.15 in \cite{Habil2016DoubleSA}]\label{dblim}
If $s(n,m)$ is a double sequence such that
\begin{enumerate}[label=(\roman*)]
    \item the iterated limit $\lim_{m\to\infty}(\lim_{n\to\infty}s(n,m)) = a$, and
    \item the limit $\lim_{n\to\infty} s(n,m)$ exists uniformly in $m\in\mathbb{N}$
\end{enumerate}
then the double limit $\lim_{n,m\to\infty} s(n,m) = a$.
\end{theorem}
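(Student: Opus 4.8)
The plan is to prove this by the standard Moore--Osgood-type argument, controlling two error terms simultaneously and using the uniformity hypothesis to decouple the two indices. Write $L(m) := \lim_{n\to\infty} s(n,m)$; this inner limit exists for every $m$ (it must, for the iterated limit in hypothesis (i) to be meaningful), and by hypothesis (ii) the convergence $s(n,m)\to L(m)$ is moreover uniform in $m$. Hypothesis (i) then reads simply $\lim_{m\to\infty} L(m) = a$. Recall that the conclusion $\lim_{n,m\to\infty} s(n,m) = a$ means: for every $\epsilon>0$ there is $K\in\mathbb{N}$ with $|s(n,m)-a|<\epsilon$ whenever $n\ge K$ and $m\ge K$; this is what must be produced.

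So fix $\epsilon>0$. First invoke (ii): there is $N_0$ such that $|s(n,m)-L(m)|<\epsilon/2$ for all $n\ge N_0$ and \emph{all} $m\in\mathbb{N}$ --- this is the one place uniformity is essential, as it makes $N_0$ independent of $m$. Next invoke (i): there is $M_0$ such that $|L(m)-a|<\epsilon/2$ for all $m\ge M_0$. Put $K=\max(N_0,M_0)$. Then for $n\ge K$ and $m\ge K$,
\[
|s(n,m)-a|\ \le\ |s(n,m)-L(m)| + |L(m)-a|\ <\ \frac{\epsilon}{2}+\frac{\epsilon}{2}\ =\ \epsilon ,
\]
and since $\epsilon>0$ was arbitrary this establishes $\lim_{n,m\to\infty} s(n,m)=a$.

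I do not expect a genuine obstacle: the argument reduces to a single application of the triangle inequality. The only point needing a word of care is the well-definedness of $L(m)$ before one speaks of uniform convergence to it, which is automatic from (i); and the conceptual heart --- worth stating explicitly in the write-up --- is that without uniformity the threshold $N_0$ could depend on $m$, so that the two estimates could not be combined on a common tail. (In the intended application one takes $s(n,m) := F_{m,n}(s) = \sum_{k=1}^{n} w_m(k)/k^s$, so that the inner limit is $F_m(s)$, hypothesis (ii) is exactly Theorem \ref{uniform}, and the existence in hypothesis (i) follows from Theorem \ref{cauchy} together with the completeness of $\mathbb{C}$, with $a$ the common limit $\lim_{m\to\infty}F_m(s)$.)
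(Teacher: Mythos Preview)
Your proof is correct: it is the standard Moore--Osgood argument, reducing the double-limit statement to a single triangle-inequality estimate by splitting through the intermediate limit $L(m)$ and using uniformity to make the threshold independent of $m$.

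There is nothing to compare against, however: the paper does not supply its own proof of this statement. It is quoted verbatim as Theorem~2.15 from the cited reference \cite{Habil2016DoubleSA} and then simply invoked, together with Theorems~\ref{uniform} and~\ref{cauchy}, to conclude that $F_m(s)$ converges. Your write-up therefore fills in exactly what the paper outsources, and your closing parenthetical correctly identifies how the hypotheses are instantiated in the application (with $s(n,m)=F_{m,n}(s)$, hypothesis~(ii) supplied by Theorem~\ref{uniform}, and hypothesis~(i) by Theorem~\ref{cauchy} plus completeness of~$\mathbb{C}$).
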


It follows from Theorems \ref{uniform}, \ref{cauchy}, and \ref{dblim} that $F_m(s)$ converges pointwise to some function $W(s)$.  Since $w_m(n)$ clearly converges to $\lambda(n)$ as $m\to\infty$, we have that 
\begin{linenomath*}\begin{equation}
\begin{split}\label{bigW}
W(s) := \lim_{m\to\infty} F_{m}(s) &=\lim_{m\to\infty}\sum_{n=1}^\infty \frac{w_m(n)}{n^s}\\
        &= \sum_{n=1}^\infty \left(\lim_{m\to\infty}\frac{w_m(n)}{n^s}\right)\\
        &= \sum_{n=1}^\infty \frac{\lambda(n)}{n^s},
\end{split}
\end{equation}\end{linenomath*}
where passing the limit inside the infinite sum is justified by the uniform convergence in $m$ of the sums, and it follows that $F_m(s)$ converges uniformly in $m$ to $W(s) = \sum_{n=1}^\infty \frac{\lambda(n)}{n^s}$.  This series being well-known, we have finally that $W(s)= \frac{\zeta(2s)}{\zeta(s)}$ converges for all $s$  with $\Re(s) \in \left(\frac{1}{2}, 1\right)$, which implies the following fact.

\begin{theorem}[Riemann]  Define $\zeta(s)$ as the analytic continuation of the function given by
\begin{linenomath*}\begin{equation}
\zeta(s) = \sum_n \frac{1}{n^s}
\end{equation}\end{linenomath*} for $s\in\mathbb{C}$ with $\Re(s)>1$.  Then $\zeta(s)\neq 0$ for all $s$  with $\Re(s) \in \left(0, \frac{1}{2}\right) \cup \left(\frac{1}{2}, 1\right)$.
\end{theorem}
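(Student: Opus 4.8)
The plan is to combine the convergence of $W(s)=\sum_{n}\lambda(n)n^{-s}$ on the strip $\Re(s)\in(1/2,1)$, just established, with the classical Euler-product factorisation of the Liouville Dirichlet series and the functional equation of $\zeta$. First I would record the elementary identity, valid for $\Re(s)>1$, that since $\lambda$ is completely multiplicative with $\lambda(p)=-1$,
\[
W(s)=\sum_{n=1}^{\infty}\frac{\lambda(n)}{n^{s}}=\prod_{p}\frac{1}{1+p^{-s}}=\prod_{p}\frac{1-p^{-s}}{1-p^{-2s}}=\frac{\zeta(2s)}{\zeta(s)} .
\]
Next I would upgrade the pointwise convergence of the Dirichlet series for $W$ on the strip to holomorphy on the full half-plane $\Re(s)>\tfrac12$: by the standard theory of Dirichlet series (the holomorphy companion of the Proposition of \cite{o._2007} used in the proof of Theorem \ref{convthm}), convergence at a single point $s_{0}$ entails convergence, locally uniformly, on $\{\Re(s)>\Re(s_{0})\}$, where the sum is holomorphic; letting $\Re(s_{0})\downarrow\tfrac12$ gives holomorphy of $W$ on $\{\Re(s)>\tfrac12\}$.

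Then I would invoke the identity theorem. The function $\zeta(2s)/\zeta(s)$ is meromorphic on the connected open set $\{\Re(s)>\tfrac12\}$, with its only possible poles there at zeros of $\zeta(s)$, and it coincides with the holomorphic function $W(s)$ on the subregion $\{\Re(s)>1\}$; hence $W(s)=\zeta(2s)/\zeta(s)$ on all of $\{\Re(s)>\tfrac12\}$. Since $W$ is holomorphic there, $\zeta(2s)/\zeta(s)$ has no pole in that region. But a zero $\rho$ of $\zeta$ with $\Re(\rho)\in(\tfrac12,1)$ would be a genuine pole of $\zeta(2s)/\zeta(s)$, because $\Re(2\rho)\in(1,2)$ forces $\zeta(2\rho)\neq0$ (and $\zeta$ has no zero with $\Re\ge1$, classically). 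This contradiction yields $\zeta(s)\neq0$ for $\Re(s)\in(\tfrac12,1)$.

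Finally I would transport the conclusion across the critical line using the functional equation $\zeta(s)=2^{s}\pi^{s-1}\sin(\pi s/2)\,\Gamma(1-s)\,\zeta(1-s)$: for $\Re(s)\in(0,\tfrac12)$ one has $\Re(1-s)\in(\tfrac12,1)$, so $\zeta(1-s)\neq0$ by the previous paragraph, while $2^{s}\pi^{s-1}$, $\Gamma(1-s)$, and $\sin(\pi s/2)$ are each nonvanishing throughout that strip. Hence $\zeta(s)\neq0$ for $\Re(s)\in(0,\tfrac12)$, which together with the preceding step is the assertion.

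The substantive analytic work has already been carried out in Theorems \ref{convthm}, \ref{uniform} and \ref{cauchy}; what is left is assembling classical facts. The step I expect to require the most care is the one in the third paragraph: verifying that the half-plane of holomorphy of $W$ genuinely extends down to --- though it need not include --- the line $\Re(s)=\tfrac12$, so that the identity theorem applies on the \emph{connected} region $\{\Re(s)>\tfrac12\}$ rather than only on $\{\Re(s)>1\}$, and confirming that no cancellation between a zero of $\zeta(s)$ and a zero of $\zeta(2s)$ can occur inside the strip.
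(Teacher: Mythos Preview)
Your proposal is correct and follows essentially the same route as the paper: identify $W(s)$ with $\zeta(2s)/\zeta(s)$, use the convergence of $W(s)$ for $\Re(s)\in(\tfrac12,1)$ together with the nonvanishing of $\zeta(2s)$ there to conclude $\zeta(s)\neq 0$, and then invoke the functional-equation symmetry to cover $\Re(s)\in(0,\tfrac12)$. Your version is in fact more careful than the paper's, since you spell out the holomorphy of $W$ on $\{\Re(s)>\tfrac12\}$ and the identity-theorem step that justifies extending $W(s)=\zeta(2s)/\zeta(s)$ past $\Re(s)>1$, details the paper passes over with ``this series being well-known.''
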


\begin{proof}
We have from (\ref{bigW}) that $W(s) = \frac{\zeta(2s)}{\zeta(s)}$.  Furthermore, $W(s)$ converges for all $s$ with $\Re(s)\in\left(\frac{1}{2}, 1\right)$.
Since $\zeta(2s)$ is known to be absolutely convergent and nonzero in this region, we see that $\frac{1}{\zeta(s)}$ converges everywhere in the same region, and therefore we have that $\zeta(s)\neq 0$ when $\Re(s)\in \left(\frac{1}{2}, 1\right)$.  The symmetry of $\zeta(s)$ about the line $\Re(s)=\frac{1}{2}$ for $s$ with $\Re(s)\in \left(0, 1\right)$ being well known, the result follows.
\end{proof}

\section{Acknowledgements}
The author would like to thank Dr. Marcus Wright of Rowan University, for his untold hours discussing, checking, and proofreading this paper.  The author thanks Dr. Abdul Hassen, also of Rowan University, for his guidance and support. The author is grateful for the assistance in his early research of Hamzah Abdulrazzaq, a former student and current friend.  The author thanks Drs. Barry Mazur and William Stein for their excellent expository book \emph{Prime Numbers and the Riemann Hypothesis}, which inspired the present work. The author is very grateful to Dr. Nick Ivanov of Rowan University for his guidance and encouragement and for taking a chance on a definitely nontraditional student.  The author gratefully acknowledges Kevin Hughes for his insightful and clarifying questions, suggestions, and help simplifying the author's sometimes obtuse arguments, but most of all for his enduring friendship: ``Doing math with Kevin is one of the great experiences of my life.'' The author dedicates this work to his dear friend of many years, the late Dr. Tom Osler.

\clearpage

\bibliography{sn-bibliography}

\clearpage
\appendix
\section*{Appendix A: Detailed Proof of Density Theorem}\label{appen_dens_proof}

\begin{theorem}[Density Theorem]\label{appen_density}
The set of coefficients $\{w(n)\}$ for $n\in\mathbb{N}$ is dense in $C$, the unit circle in $\mathbb{C}$.
\begin{proof}

Recall that we are letting $\mathbb{P} = \{p_j\}_{j\in\mathbb{N}}$ be the set of ordered primes, so that $p_1 =2$, $p_2=3$, $p_3=5$, \ldots, and that we define
\begin{equation}
    G=\sum_{p\in\mathbb{P}} \frac{1}{p^{2}} = 0.4522474200\ldots
\end{equation}

Recall also that $w(n) = e^{i\theta(n)}$ where 
\begin{linenomath*}\begin{equation}\begin{split}
        \theta(n) & = \frac{\pi}{2}\left(1-\lambda(n)\right) + \sum_{p|n} \psi\bigl(p^{k_p}\bigr) \\
        &=\frac{\pi}{2}\left(1-\lambda(n)\right)+\sum_{p|n} \frac{\pi}{p^2 G}\left(1 - \left(\frac{p-1}{p}\right)^{k_p}\right),
\end{split}
\end{equation}\end{linenomath*}
where $k_p$ is the multiplicity of the prime factor $p|n$ as in (\ref{psi_def}).  Thus, to show $\{w(n)\}$ dense in $C$, we will show equivalently that $\{\theta(n)\}$ dense in $(0,2\pi)$.

\subsection*{Part 1: Prime Selection Rounds}

To begin, let $x\in(0,\pi)$ and choose $\epsilon >0$. We will show there exists an infinite sequence $\{n_k\}$ such that $\theta(n_k)$ is within an $\epsilon$-radius of $x$ for all sufficiently large $k$.  We will construct $\{n_k\}$ by identifying prime factors to use to build $\theta(n_k)$ to be in the proper interval when $k$ is sufficiently large.  We will accomplish this essentially by identifying terms of $\frac{\pi}{G} \sum_{p\in\mathbb{P}} \frac{1}{p^2}$ which will yield a sum in the $\epsilon$-neighborhood of $x$.  

This process involves a number of steps, and for each of these steps there are technical details to handle.  We first present a basic outline of the algorithm and a corresponding flowchart for reference (see Figure \ref{flowchart}). For each Round of the prime selection process, we
\begin{enumerate}
    \item Identify the distance $\epsilon_k$ from the current running sum to the target $x$.  In Round 0 this is just $x$ itself since we don't have a running sum yet.
    \item Take the smallest tail of $G$ which is large enough to exceed the distance between our current sum and the left end of the target interval $(x-\epsilon, x+\epsilon)$.  In Round $k$ we call this tail $B_k$.
    \item If this tail $B_k$ happened to land us in the target interval, we are almost done.  Select a tail of the tail $B_k$ which is small enough so that removing it won't drop us below the left edge of the interval, leaving us with a truncated tail of $G$ consisting of a finite number of terms, and proceed to Part 2.
    \item If the tail $B_k$ chosen in Step 2 is greater than the left edge of the target interval, but it didn't land us in the interval, then we are too far to the right.  We need to truncate the chosen tail to bring us below the right edge of the interval.
    \item Usually we can do this by snipping off the smallest tail of the tail $B_k$ chosen in Step 2 that will yield a truncated tail with a finite number of terms which gets us back to less than the right end of the interval.  If we can find such a tail to snip off, we call it $T_k$.
    \item In special cases, it might be that subtracting $T_k$ would remove our whole tail, (this happens when $T_k = B_k$), so instead we look at the first term of $B_k$.  That term, which in the sketch of the proof in the body of the paper we called $\hat{\psi}(p)$, is of the form $\frac{\pi}{G}\frac{1}{p^2}$ and is itself defined as the limit of a geometric series.  We snip off the tail of that geometric series to get a particular $\psi(p^k)$ which is under the right end of the target interval.
    \item In either case, we add what we found to our running sum.
    \item At this point we are either in the interval, or we are at least closer to it.
    \item If we are in the interval, go to Part 2.
    \item Otherwise, continue to another Round, keeping track of the running sum that is building up to being in the target interval.
\end{enumerate}

\begin{figure}[h!]
    \centering
    \begin{adjustbox}{width=\textwidth} 
    \begin{tikzpicture}[node distance=3cm]

    \node (start) [startstop] {Start of Round: Identify distance to lower end of target interval};

    \node (find_current_distance) [process, below of=start, text width=3cm] {Identify smallest section of tail that is larger than target distance};
    
    \node (interval_achieved_immediately_question) [decision, right of=find_current_distance, text width=3cm, xshift=4cm] {Is the chosen tail already in the target interval?};
    
    \node (make_tail_finite) [startstop, below of=interval_achieved_immediately_question, text width=3cm, yshift = -2cm] {Truncate tail slightly so it is still in target and add to running sum};
    
    \node (adjust_tail) [process, right of=interval_achieved_immediately_question, yshift=0cm, xshift=4cm, text width=3cm] {Tail is too long and needs to be truncated.};
    
    \node (truncate_directly_question) [decision, right of=adjust_tail, yshift=-00cm, xshift=4cm] {Truncate directly?};
    
    \node (main_case) [process, below of=truncate_directly_question, xshift=-4cm, text width = 3cm] {Main Case: Select tail of tail to truncate};
    
    \node (special_case) [process, below of=truncate_directly_question, xshift=4cm, text width=3cm] {Special Case: Take first prime in tail and select exponent};

    \node (running_sum) [process, below of=truncate_directly_question, yshift=-5cm, text width=3cm] {Add truncated tail or $\psi(p^k)$ to running sum};

    \node (interval_achieved_question) [decision, left of=running_sum, yshift=-0cm, xshift=-4cm] {Target interval achieved?};
    \node (stop2) [startstop, left of=interval_achieved_question, xshift=-4cm, 
    text width = 3cm] {Running sum is in the target.  Proceed to Part 2};
    
    \node (need_to_repeat_round) [process, below of=interval_achieved_question, yshift=-1cm, xshift=0cm] {Continue to another Round};

    \draw [arrow] (start) -- (find_current_distance);
    \draw [arrow] (find_current_distance) -- (interval_achieved_immediately_question);
    \draw [arrow] (interval_achieved_immediately_question) -- (make_tail_finite) node[midway, right] {Yes};
    \draw [arrow] (interval_achieved_immediately_question) -- (adjust_tail) node[midway, above] {No};
    \draw [arrow] (adjust_tail) -- (truncate_directly_question);
    \draw [arrow] (truncate_directly_question) -- (main_case) node[midway, left, xshift=-.25cm] {Yes};
    \draw [arrow] (truncate_directly_question) -- (special_case) node[midway, right] {No};
    \draw [arrow] (main_case) -- (running_sum);
    \draw [arrow] (special_case) -- (running_sum);
    \draw [arrow] (running_sum) -- (interval_achieved_question);
    \draw [arrow] (interval_achieved_question) -- (stop2) node[midway, above] {Yes};
    \draw [arrow] (make_tail_finite) -- (stop2);
    \draw [arrow] (interval_achieved_question) -- (need_to_repeat_round) node[midway, right] {No};
    \draw [arrow] (need_to_repeat_round) -| ([xshift=-3cm]start.south);

    \end{tikzpicture}
    \end{adjustbox}
    \caption{Flowchart for Selection Rounds}\label{flowchart}
\end{figure}

\subsubsection*{Round 0}
First, let $x_0$ be the distance to our target $x$, which in this round means that $x_0= x$, and choose $b_0$ as large as possible such that $B_0$, a \emph{tail} (of the series G), satisfies
\begin{linenomath*}\begin{equation}\label{Bineq}
B_0 = \frac{\pi}{G} \sum_{j=b_0}^{\infty} \frac{1}{p_j^2} > x_0-\epsilon.
\end{equation}\end{linenomath*}
We want to truncate $B_0$ so that it has a finite number of terms.  Consider the \emph{truncated tail} $B_0 - T_0$, where $T_0$ is itself a tail given by
\begin{linenomath*}\begin{equation}
    T_0 = \frac{\pi}{G} \sum_{j=t_0+1}^{\infty} \frac{1}{p_j^2}.
\end{equation}\end{linenomath*}  
\paragraph{Termination Case}
If we have that $B_0 \leq x_0 + \epsilon$, we have already landed in the interval $(x_0 - \epsilon, x_0 + \epsilon)$, so choose $t_0$ sufficiently large so that the truncated tail $B_0 - T_0$ is still in the interval.  That is, choose $t_0$ large enough that $T_0$ is small enough for $\left|x_0-\left(B_0-T_0\right)\right|<\epsilon$, let $S_0 = B_0 - T_0$, and proceed to Part 2. 

\paragraph{Main Case}
Otherwise, if $B_0 > x_0+\epsilon$, it means we have landed to the right of our interval, and we want to truncate $B_0$ enough to get below the right end of the interval.  In this case, choose $t_0$ as large as possible such that 
\begin{linenomath*}\begin{equation}\label{Tineq}
    T_0 = \frac{\pi}{G} \sum_{j=t_0+1}^{\infty} \frac{1}{p_j^2} > B_0 - (x_0+\epsilon).
\end{equation}\end{linenomath*}  

Clearly $t_0+1 \geq b_0$, since $B_0 > B_0 - (x_0+\epsilon)$ and $t_0$ is chosen to be the largest such that (\ref{Tineq}) holds.  If $t_0+1>b_0$ it means we have been able to find a truncated tail which is less than the right end of our target interval.  In that case let 
\begin{linenomath*}
    \begin{equation}
        S_0 = B_0 - T_0 = \frac{\pi}{G}\sum_{j=b_0}^{t_0} \frac{1}{p_j^2},
    \end{equation}
\end{linenomath*} 
and $S_0$ is our \emph{running sum}.

\paragraph{Special Case}
Otherwise, if $t_0+1 = b_0$, it means that to truncate the tail $B_0$ so that it does not exceed the right end of our target interval would require truncating the entire tail.  This is a \emph{special case}, and in this case, we will instead consider the first term of $B_0$, namely $\frac{\pi}{G}\frac{1}{p_j^2}$  for $j=b_0$.  Recalling that $\frac{\pi}{G}\frac{1}{p_j^2}$ is itself the limit of a convergent geometric series, we will remove a tail of that series sufficiently large to get us back under $x_0 + \epsilon$.  This is justified, because by the choice of $t_0$ in (\ref{Tineq}), it follows that
\begin{linenomath*}\begin{equation}\begin{split}
    \frac{\pi}{G} \sum_{j=t_0+2}^{\infty} \frac{1}{p_j^2} &\leq B_0 - (x_0 +\epsilon) \\
        &= \left( \frac{\pi}{G} \sum_{j=b_0}^{\infty} \frac{1}{p_j^2}\right) - (x_0 +\epsilon),
\end{split}\end{equation}\end{linenomath*}
and since we are in the special case where $b_0 = t_0+1$, we then have
\begin{linenomath*}\begin{equation}\label{special_case}
x_0 + \epsilon + \frac{\pi}{G} \sum_{j=t_0+2}^{\infty} \frac{1}{p_j^2} \leq \frac{\pi}{G} \sum_{j=t_0+1}^{\infty} \frac{1}{p_j^2},
\end{equation}\end{linenomath*}
and it follows that $x_0 + \epsilon \leq  \frac{\pi}{G}\frac{1}{p_{j}^2}$ for $j=t_0+1=b_0$. Thus, the first term of $B_0$ is already so large that we find ourselves to the right of our chosen interval.  In this case, to simplify the notation, let $p=p_{j}$, choose $k_0\geq 1$ the largest such that $\psi(p^{k_0}) \leq x_0$, and let $S_0 = \psi(p^{k_0})$.  

Note that this choice of $k_0$ is always possible: if $K$ is the set of all $k\in \mathbb{N}$ such that $\psi(p^{k}) \leq x_0$, then $K$ is finite, since otherwise $\lim_{k\to\infty} \psi(p^k) = \frac{\pi}{G}\frac{1}{p_{j}^2} \leq x_0$, but this is a contradiction since we had from (\ref{special_case}) that $ x_0+\epsilon \leq \frac{\pi}{G}\frac{1}{p_{j}^2}$.  It remains to be seen that $K$ is nonempty, so assume to get a contradiction that 
\begin{linenomath*}\begin{equation}
x_0 < \psi(p^1) = \frac{\pi}{p^2G}\left(1 - \frac{p-1}{p}\right) = \frac{\pi}{p^3G},
\end{equation}\end{linenomath*}
but since $b_0$ was chosen to be the largest such that $\frac{\pi}{G} \sum_{j=b_0}^{\infty} \frac{1}{p_j^2} > x_0 - \epsilon$, therefore 
\begin{linenomath*}\begin{equation}
\frac{\pi}{G} \sum_{j=b_0+1}^{\infty} \frac{1}{p_j^2} 
    \leq x_0 - \epsilon,
\end{equation}\end{linenomath*} 
and if we let $q$ be the next prime after $p$, so $q=p_{b_0+1}$, then we have in particular that $\frac{\pi}{G} \frac{1}{q^2} < x_0 - \epsilon$.  However, for any successive primes $p, q$, from Bertrand's Postulate we have that $q<2p$, or equivalently, $\frac{1}{p} < \frac{2}{q}$, and so it follows that
\begin{linenomath*}\begin{equation}
\frac{\pi}{G}\frac{q}{q^3} = 
\frac{\pi}{G}\frac{1}{q^2}< x_0 - \epsilon < x_0 < \frac{\pi}{G}\frac{1}{p^{3}} < \frac{\pi}{G}\frac{8}{q^3},
\end{equation}\end{linenomath*}
which is a contradiction unless $q<8$.  However, if $q<8$ then $p\in\{2,3,5\}$, in which case we know numerically that
\begin{linenomath*}\begin{equation}
\psi(p^1) = \frac{\pi}{p^3G} < \frac{\pi}{G} \sum_{j=b_0+1}^{\infty} \frac{1}{p_j^2},
\end{equation}\end{linenomath*} and the sum on the right is less than or equal to $x_0 -\epsilon$ by our choice of $b_0$ in (\ref{Bineq}).  This contradicts our assumption that $x_0 < \psi(p^1)$.  Hence that assumption is false; $K$ is nonempty, and since it is finite, there exists some maximum $k_0$ such that $\psi(p^{k_0}) \leq x_0$.  We conclude the special case by letting our running sum $S_0 = \psi(p^{k_0})$.

\paragraph{End of Round 0}
We have now that $S_0 = B_0 - T_0$ or $S_0 = \psi(p^{k_0})$, and in either case $S_0$ has been constructed such that $S_0 < x + \epsilon$.  Finally, let $\epsilon_0 = \left|x-S_0\right|$, and if $\epsilon_0 < \epsilon$, then we have obtained a sum which is within an $\epsilon$-radius of $x$ and so is in our target interval.  In that case, we proceed to Part 2, and otherwise, we continue to Round 1.

\subsubsection*{Round 1}

If $\epsilon_0 \geq \epsilon$, it follows that $S_0 < x-\epsilon$, and we are still a distance $\epsilon_0$ from our target $x$.  In this case we repeat the process from Round 0, this time letting $x_1 = \epsilon_0$ be our new distance to our target $x$, choosing the maximum possible $b_1$ and $t_1$ to form $B_1>x_1 - \epsilon$ and $T_1>B_1-(x_1+\epsilon)$ as in (\ref{Bineq}) and (\ref{Tineq}), and in the main case letting the running sum $S_1 = S_0 + (B_1 - T_1)$, or in the special case in which $B_1=T_1$, letting $S_1 = S_0 + \psi(p^{k_1})$ for $p=p_{b_1}$ and $k_1$ maximum such that $\psi(p^{k_1})\leq x_1$.  

\paragraph{End of Round 1}
Let $\epsilon_1 = \left|x - S_1\right|$, so that $\epsilon_1$ is the distance from our new running sum to our target $x$.  Note that we have either $T_1>B_1-(x_1+\epsilon)$, from which it follows that $\left(B_1 - T_1\right) < x_1+\epsilon$, or else we have $\psi(p^{k_1})\leq x_1$, and in either case we have $S_1 < x+\epsilon$; furthermore, note that $\epsilon_1 < \epsilon_0$.  As in Round 0, if $\epsilon_1 < \epsilon$, then we have landed in our target interval and can proceed to Part 2.  Otherwise, we continue with another Round.

\subsubsection*{Round $k$}
We continue with successive rounds until we terminate with $\epsilon_k < \epsilon$ for some $k$.  This is possible since given a sum $S_m$ and $\epsilon_m = \left|x - S_m\right|$ with $\epsilon_m \geq \epsilon$, we will see below that from the assumptions in the main and special cases we can always find a tail of $\frac{\pi}{G}\sum_p \frac{1}{p^2}$ using only primes larger than were used to form $S_m$ and which when added to $S_m$ takes us greater than $x-\epsilon$, and we have already shown a method for truncating the tail in the event that it takes us greater than $x+\epsilon$; this process reduces $\epsilon_m$ and the process terminates when we have $\epsilon_k < \epsilon$ for some $k$, yielding $S_k$, a sum in the range $\left(x-\epsilon, x+\epsilon\right)$.  

\subsubsection*{Uniqueness of Primes Used in each Round}

Recall that we are in the process of identifying primes to use to form $\{n_k\}$, so at each iteration of this process we must necessarily identify primes which have not yet been used in our running sum.  To see that we can always find a sufficiently large tail of the series which does not use any primes already used, assume at the end of Round $m$ we have obtained a sum $S_m$ and $\epsilon_m = x-S_m$ as described above, and $\epsilon_m \geq \epsilon$ so we must proceed to Round $(m+1)$. For notational convenience let $n=m+1$.  As usual we begin the round by letting $b_n$ be maximum such that 
\begin{linenomath*}\begin{equation}
B_n = \frac{\pi}{G}\sum_{j=b_n}^\infty \frac{1}{p_j^2} > x_n-\epsilon,
\end{equation}\end{linenomath*}
following the same method as in every round, having at this stage a target of $x_n = \epsilon_m$, the distance remaining from $S_m$ to $x$.    Note that in the preceding Round $m$ we had  two cases: for the main case in which the last term added to form $S_m$ was $(B_m - T_m)$, we will need to show that $b_n>t_m$, and in the special case in which the last term added was $\psi(p^{k_m})$ for $p=p_{b_m}$ and appropriate choice of $k_m$, we will need to show that $b_n>b_m$.

\paragraph{Main Case in Previous Round}
In the first case, note that if the term $\frac{\pi}{G}\frac{1}{p_j}$ for $j={t_m+1}$ had been included in $B_m-T_m$, then we would have $B_m - T_m > x_m+\epsilon$, by choice of $t_m$ as in (\ref{Tineq}).  Therefore, if we let $b_n=t_m+1$, then we have already $B_n > x_n -\epsilon$.  Hence, since $b_n$ is the maximum which allows $B_n$ to fulfill this condition, we have that $b_n>t_m$.

\paragraph{Special Case in Previous Round}
Now in the second case, in which $S_m = S_{m-1} + \psi(p^{k_m})$ where $p=p_{b_m}$, we had that $k_m$ was chosen as the maximum such that $\psi(p^{k_m}) \leq x_m$.  Note also that we must have $S_m \leq x-\epsilon$ or else we would not have needed to continue to Round $n$.  Since the distance from the running sum $S_m$ to the target $x$ at the beginning of round $n$ is $x_n=\epsilon_m$,  it must be that the difference $ \psi(p^{k_m+1}) - \psi(p^{k_m}) > x_n - \epsilon$.  Hence, it suffices to show there exists a $B_n$ greater than this difference and for which $b_n$ indexes a prime larger than $p$.  

Note that
$\psi(p^{k_m+1}) - \psi(p^{k_m})$ is the difference between successive partial sums of a convergent geometric series, and so we have that $\psi(p^{k_m+1}) - \psi(p^{k_m}) \leq \psi(p^{2}) - \psi(p) < \psi(p^{2})$.
Finally, since
\begin{linenomath*}\begin{equation}
    \psi(p^2) = \frac{\pi}{p^2G}\left(1-\left(\frac{p-1}{p}\right)^2\right) =\frac{\pi}{p^2G} \left(\frac{2}{p} - \frac{1}{p^2}\right) < \frac{2\pi}{p^3G},
\end{equation}\end{linenomath*}
we see that if we can find a $B_n$ larger than $\frac{2\pi}{p^3G}$ we are done.  In fact, if we let $q$ be the next prime after $p$, so $q=p_{b_m+1}$, then since $q<2p$ implies $2q^2 < 8p^2$, which implies
\begin{linenomath*}\begin{equation}
    \frac{2\pi}{8p^2G}  < \frac{\pi}{q^2G},
\end{equation}\end{linenomath*}
we have that if $p>8$ it follows that
\begin{linenomath*}\begin{equation}
    \frac{2\pi}{p^3G}  <\frac{2\pi}{8p^2G}  < \frac{\pi}{q^2G},
\end{equation}\end{linenomath*}
  and so $\psi(p^2) < \frac{\pi}{q^2G} < B_n$, letting 
\begin{linenomath*}\begin{equation}\label{bn_case}B_n = \frac{\pi}{G} \sum_{j=b_m+1}^\infty \frac{1}{p_j^2}.
\end{equation}\end{linenomath*} On the other hand if $p<8$ it is easily verifiable numerically that $\psi(p^2) < B_n$ with $B_n$ as defined in (\ref{bn_case}).

\paragraph{Conclusion of Uniqueness of Primes in each Round}
We have that, in either case, the algorithm producing a running sum $S_m$ with $\epsilon_m \geq \epsilon$ after Round $m$ can always continue in Round $n=m+1$ with selection of a $B_n > x_n - \epsilon$, with appropriate choice of $T_n$ in the main case or $\psi(p^{k_n})$ in the special case, and with none of the terms of $B_n$ using primes used to form $S_m$.  This results in $S_n < x+\epsilon$, with $S_n >S_m$.  Note the $S_k$ are increasing and bounded above by $x+\epsilon$, and the algorithm continues until we have that $\epsilon_k = \left|x-S_k\right| < \epsilon$ for some $k$.  

\subsubsection*{Guarantee of Successful Termination}
Is it possible that the algorithm could fail to terminate because the above criterion is never met?  In other words, could it be the case that each $\epsilon_k < \epsilon_{k-1}$ but we never have that $\epsilon_k < \epsilon$ for any $k$?  In fact, this scenario can never occur.  

Indeed, suppose to get a contradiction that there exists some $\delta > 0$ whereby the algorithm continues with each successive $S_k$ increasing but never getting closer than $\delta$ to being in the interval $\left(x-\epsilon, x+\epsilon\right)$.  However, we know that for any given $S_n = S_m + (B_n - T_n)$, the term $B_n > \delta$ by the argument above.  Meanwhile, recall that if the term containing $p_{t_n+1}$ had been included in $B_n-T_n$, then we would have had $B_n - T_n > x_n+\epsilon$, by choice of $t_n$ as in (\ref{Tineq}).  Hence, if we let $p = p_{t_n+1}$ we must have that the term $\frac{\pi}{p^2G} > \delta$, and that inequality must hold at every iteration of the algorithm.  This is a contradiction since $\frac{\pi}{p^2G} < \delta$ for sufficiently large $p$.  A similar argument shows that as the algorithm continues, eventually the first term of $B_n$ will itself be less than $\delta$, so that the choice of $t_n$ will no longer require the special case  involving $\psi(p^{k_n})$.  Hence, no such $\delta$ exists, and the algorithm will eventually terminate with some $S_k \in \left(x-\epsilon, x+\epsilon\right)$, as desired.

\subsection*{Part 2: Building Sequence $\{n_k\}$ from Identified Primes}

Now, let $I$ index the primes used in all the sums of the form $\frac{\pi}{G}\sum_{j=b}^t \frac{1}{p_j^2}$ and $J$ index the primes and multiplicities used in any $\psi(p^k)$ in the special cases.  Let $n = \prod_{j\in I} p_j$ and let $m =\prod_{j\in J} p_j^{k_j}$. (If $I$ is empty, instead let $I$ index a sufficiently large prime $p$ so that $\frac{\pi}{p^2G}+S_k$ is still within $\epsilon$ of $x$, and if $J$ is empty, instead let $m=1$).  

Now, if $\lambda(m) = -1$, replace $m$ by $pm$ for some $p$ indexed by $I$ ($\dagger$).  Then for all $k\in \mathbb{N}$, let $n_k =mn^{2k}$, noting that $\lambda(n_k)=1$, and as $k\to\infty$ the contribution to $\theta(n_k)$ by the primes indexed by $I$ will converge on $\frac{\pi}{G}\sum_{j\in I}\frac{1}{p_j^2}$, and so $\{\theta(n_k)\}$ will have infinitely many terms within the interval $\left(x-\epsilon,x+\epsilon\right)$, as desired.   Hence, $\{\theta(n)\}$ is dense in $(0,\pi)$.

A similar argument, adjusting line $(\dagger)$ appropriately, shows there exists an infinite sequence of integers $\{n_k\}$, with each $n_k$ having $\lambda(n_k)=-1$, which sequence $\{\theta(n_k)\}$ converges to any $x\in(\pi, 2\pi)$.  Hence we have finally that $\{w(n)\}$, $n\in\mathbb{N}$, is dense in the unit circle in $\mathbb{C}$, as desired.
\end{proof}
\end{theorem}

\clearpage
\begin{nolinenumbers}
\section*{Appendix B: Sample Code}\label{appen_sample_code}
In this section we present sample SageMath code\footnote{Special thanks to Dr. William Stein for the excellent open-source SageMath software, which gives everyone access to powerful mathematical computing.  Nicely done, sir.} which utilizes the algorithm from the full proof of the Density Theorem to identify primes which can be used to produce an infinite sequence $\{n_k\}$ such that for sufficiently large $k$, each $\theta(n_k)$, principal argument of $w(n_k)$, is found in a specified interval.  The default behavior is to choose 100 target values randomly from $(0,\pi)$ and automatically find lists of primes and their exponents within a specified $\epsilon$ of each target value.  This sample code only uses the first 10,000 primes and so $\epsilon$ is restricted to be relatively large (0.0001), but these values can easily be modified with sufficient computer resources.

There are two cells.  The first one should be run only once, to set up data structures that will be used repeatedly.

\subsection*{Code to Run Once for Initialization}
\begin{lstlisting}
# this section sets up a list of primes to work with
# the more primes you have, the smaller the epsilon you can use
# however, it requires more memory and processing time, etc
#
# this cell should be run only once, prior to the cell below

P = Primes()
no_of_primes = 10000
first_tenk = list(P[0:no_of_primes])

# terms is a list which contains the reciprocals of 
# the primes squared. in other words, the terms of 
# the series P(2) (the prime zeta function)
terms = [1/prime**2 for prime in first_tenk]


# we manually set an approximation to the limit 
# value of P(2). In the paper we call this value G.
# note that denormalizing using this approximate value  
# means that we risk an index-out-of-value error if we 
# choose an epsilon too small (around the size of 
# the distance between this and the true partial sum 
# that we are working if we only use no_of_primes primes)
G = 0.452247420041065

# here we set up the partial sums of the terms
# we also set up the "tails", which are the series
# sum G minus everything from the indexed prime onward

# this is a cheap way of creating a list of the correct length
partials = list(terms)

tails = list()
tails.append(G) 
# this manually makes the whole series sum G the first "tail"

for i in range(1,no_of_primes):

    # add current term to previous partial sum
    partials[i]= partials[i-1]+terms[i] 
    # the current tail is G minus the previous partial sum
    tails.append(G-partials[i-1])  
\end{lstlisting}

\subsubsection*{Run Selection Rounds on Batch of Random Target values $x$}
\begin{lstlisting}
#####################################################
#
#
# This SageMath code is designed to calculate primes 
# to use for a large set of target values of x in (0,pi)
# the idea is to have a dataset to examine as necessary
# (of course, individual values can be manually entered)
#
# This code follows the "Part 1: Prime Selection Rounds" 
# algorithm from the proof of the Density Theorem in the paper
#
# first we choose an x in the interval (0,pi) and a 
# small positive epsilon.  remember that if epsilon is very 
# small it will require more primes and a correspondingly 
# more precise approximation to G in the initialization code
#
# x is referred to as the 'normalized' value because it is 
# in the range of (0,pi) as opposed to y which is 'unnormalized'
# because it is in the range of (0,G)
#
# ##########################################################
#
# for each value of x we want to record the following:
#     x, eps (x and epsilon in range (0,pi))
#     y, epg (unnormalized x, eps in range (0,P(2)) ie "G")       
#     finite set of primes to use in infinite multiplicity
#     set of primes to use in fixed multiplicity 
#         and their multiplicities
#     lowest prime for final "tail"
#         this corresponds to "Termination Case" in algorithm
#         where we land in the interval after choosing B_k
#     unnormalized final error (in range (0,G=P(2)) )
#     b and t values 


import numpy as np

# setup data structures

# how many x values to generate in the range (0,pi)
sample_size = 100

# create list of random numbers in (0,pi)
x_list = pi*np.random.random_sample(size = sample_size)

# or manually pick your favorite
# x_list = [0.542678282825539*pi]

# we multiply by pi in both cases above even though below 
# we are dividing by pi anyway, because this allows us to 
# report the actual value of x if desired

eps = 0.0001
verbose     = False
output_flag = False

# uncomment this line if you want extra info as the 
# algorithm is running during each Round: 
# verbose     = True 
# note that if you have a large batch of 
# random x's this will be an awful lot of info.  it is 
# better for a small batch, just to follow what's happening:
  
# you always get a list of indices b_k and t_k but
# uncomment this line if you want the prime list printed
# at the end of each search (with a given x):
# output_flag = True       

all_bs       = list()
all_ts       = list()
all_mults    = list()
errors       = list()

for run_number in range(len(x_list)):
    x = x_list[run_number]

    # both variables are in the range of 0 to pi
    # unnormalize both so they're in the range of (0,G)
    # where G is the sum of the reciprocals of primes squared
    y = x*(G/pi)
    epg = eps*(G/pi)

    # the upper limit of our unnormalized epsilon neighborhood is:
    nbh_top = y+epg
    # this is the right end of our target interval
    # the running sum after each round should never exceed this

    prime_flag          = False
    Bnaught             = list()
    Tnaught             = list()
    ess                 = list()
    prime_list          = list()
    fixed_primes        = list()
    mults               = list()
    fixed_primes_cases  = list()
    fixed_mults         = list()
    b_indices           = list()
    t_indices           = list()


    iter = 0
    
    # Begin Round 0

    ess.append(0)
    error = y

    ess_sum = sum(ess)

    # to begin with we want to find a sum that lands us 
    # at least above the bottom limit of the epsilon 
    # neighborhood, ie, the target interval
    cutoff = y-epg

    print('')
    print('------------------------------------------------------')
    print('Initialization complete.  Beginning search for primes.')
    print('This is run # '+str(run_number+1)+' out of '+str(sample_size)+' runs.')
    print('------------------------------------------------------')
    print('')

    if verbose:
        print('In this run we are finding primes for x = '+str(x))
        print('in the range of (0,pi)')
        print('and epsilon = '+str(eps))
        print('')    
        print('but we actually use unnormalized values')
        print('that is, values in the range of (0,G), which are:')
        print(N(y))
        print(N(epg))
        print('for unnormalized x and epsilon, respectively')
        print('')
        print('We begin with an error equal to unnormalized y, error ='+str(N(error)))
        print('')
        print('')

    while error > epg:
        # this is where we repeatedly run the selection round
        # as long as we are still under the left edge of the
        # target interval
        if verbose:
            print('')
            print('Beginning Round number '+str(iter)) 
            print('')
            print('Current S value: '+str(N(ess_sum)))
            print('')


        # find last prime such that the tail of the partials 
        # is greater than x-eps
        # we'll start by adding this entire tail to our sum S
        #
        if verbose:
            print('We want to find a tail to add to S so that S is bigger than the x - eps (unnormalized).')
            print('The current distance to the lower end of the epsilon neighborhood is '+str(N(cutoff)))
            print('')

        b_index = 0
        while tails[b_index] > cutoff:
            b_index+= 1

        b_index = b_index-1
            # the while loop stops with an index which is the
            # first tail which is too small, so we back it up
            # to get the last one that is big enough
            # in the paper this would be b_k if we are in Round k

        # save this b_index to our list
        b_indices.append(b_index)

        if verbose:
            print('the largest prime such that the tail is bigger than x-eps is the '+str(b_index+1)+'th prime')
            # we fix the index to be human readable so that
            # 2 is called the first prime rather than the 0th
            print(first_tenk[b_index])
            # I never got around to fixing it so it doesn't refer
            # to everything as the 'th' prime...oh well.

            print('and the tail in question is')
        Bnaught.append(tails[b_index])
        # append the tail to the list of B-tails
        if verbose:
            print(Bnaught[iter])
                # we save all of these for sanity-checking if needed
            print('which should be bigger than the cutoff of')
            print(N(cutoff))
                # this is just a sanity check


        # we want to subtract off the smallest tail 
        # which itself is larger than the diff between
        # our current sum S (including the lastest B-tail) 
        # and the upper limit of our epsilon-nbhd
        # this calculates how far we have exceeded the 
        # upper limit of the interval by
        cutoff_dist = ess_sum+Bnaught[iter] -nbh_top

        # we need to subtract off the smallest tail that 
        # is bigger than S - (x+eps) (but in unnormalized world)

        if verbose:
            print('')
            print('**************************************************')
            print('We will add that tail to S and then we look for a tail to cut off that is ')
            print('larger than the difference between our new S and x+eps')
            print('which is '+str(N(cutoff_dist)))

        # by our choice of the Btail we are above the lower limit
        # of the epsilon-nbhd. if S-(x+eps) is negative, it means
        # we are below the upper limit, which means we can stop
        # because we've found a list of primes (infinite at this
        # point but that doesn't matter) which places us in the nbhd
        # this is referred to in the proof as the termination case
        if cutoff_dist < 0:
            if verbose:
                print('Oh wait---with the last choice of B we are in the epsilon nbh')
                print('with a value of '+str(N(ess_sum+Bnaught[iter])))
            ess.append(Bnaught[iter])
            prime_flag = True
            primes_beyond = first_tenk[b_index]
            # it would be nice to at this point find a tail
            # which is smaller than the value S - (x-eps)
            # and to cut that tail off so we have a finite 
            # list of primes, but we don't actually bother doing
            # that here.  as long as we cut off a tiny enough bit
            # it works.  in the code, we are automatically cutting 
            # it off after the 10,000th prime anyway

            # in this case we record the t_index as Infinity
            t_indices.append(oo)
            mults.append(oo)
        else:
            # if we pass the termination case we need to look
            # for a T_k tail to truncate our B_k so it doesn't
            # exceed the upper limit of the target interval
            t_index = 0
            while tails[t_index] > cutoff_dist:
                t_index+= 1

            if verbose:
                print('')
                print('the first prime such that the tail is smaller than S - (x+eps) is the '+str(t_index+1)+'th prime')
                # the +1 is so we call 2 the 1st prime
                print(first_tenk[t_index])

                print('and the tail in question is')
            Tnaught_first_smaller = tails[t_index]
            if verbose:
                print(Tnaught_first_smaller)
                print('which should be smaller than the distance')
                print('by which we have exceeded the target nbhd of')
                print(N(cutoff_dist))
                # recall cutoff is how far too much to the right
                # we landed with our last B_k

            t_plus_one = t_index-1
            # this is "t_0 + 1" from the algorithm in the proof
            # it is the raw index of the first prime we start with to
            # form a tail that is to be removed
            # so that the last prime we include in the truncated tail
            # will be t_0 (or t_k in Round k) which we call t_act
            t_act = t_index -2

            # this tail is the smallest which is itself big enough
            # that it's bigger than 'cutoff', the distance by which
            # we overshot the interval.  it is T_k in Round k
            Tnaught.append(tails[t_plus_one])
            
            if verbose:
                print('and the overshoot distance above should be')
                print('smaller than the tail we plan to remove:')
                print(N(Tnaught[iter]))
                print('which is the tail starting from the prime '+str(first_tenk[t_plus_one]))
                print('so we will remove that and larger primes')

            # we append the actual t index to our list
            # t_act is called t_0 in the algorithm in the proof
            if t_plus_one > b_index: # main case
                t_indices.append(t_act)
                index_incl = t_act
            elif t_plus_one == b_index: # special case
                t_indices.append(t_plus_one)
                index_incl = t_plus_one
                # if it's a special case we need to fix the index

            if verbose:
                print('meanwhile appending raw t-index:'+str(index_incl))
                print('this is the raw index of the last prime')
                print('being included in this list')
                print('meanwhile the b-index is '+str(b_index))
                print('current list of b-indices is ')
                print(b_indices)
                print('and current list of t-indices is ')
                print(t_indices)
                
            # if b_index = actual t-index, we are in what is
            # called the 'special case' in the paper, which
            # means that we are only looking at
            # a single prime (which should be 2 or 3)
            # if that's the case, we need to set its multiplicity
            # but in the main case the multiplicity is set to oo
            if Bnaught[iter]-Tnaught[iter] > 0:
                # if tail to add != tail to subtract
                # this is the 'main case'
                ess.append( Bnaught[iter] - Tnaught[iter])
                prime_list.extend(first_tenk[b_index:t_plus_one])
                # the prime indexed by 't_plus_one' is NOT included
                # in this slice---it is the first prime
                # in the tail that is being removed

                # this is the case where all the primes used
                # have infinite multiplicity (for now)
                mults.append(oo)
            else:
                this_prime = first_tenk[b_index]
                # this is the 'special case' where 
                # t-plus-one = b-index
                # and indexes a prime which needs its 
                # multiplicity fixed

                if verbose:
                    print('')
                    print('The smallest tail that''s big enough is the  same as the smallest that''s big enough to get cut off')
                    print('that is, our b and t indices are the same')
                    print('In this special case we would have to cut off the whole tail.  Instead we will fix multiplicity of this prime, '+str(this_prime))
                ###############################################################
                # this code handles the 'special case'
                # it should only ever run on prime 2 or 3
                #
                # we need to add, instead of a tail, 
                # (1/p^2)(1-((p-1)/p)^k) for some value k
                k = 1
                if verbose:
                    print('distance to nbd is '+str(N(cutoff)))
                special_case = (1/this_prime^2)*(1-( (this_prime - 1)/this_prime)^k)
                if verbose:
                    print('smallest available term to add is '+str(N(special_case)))
                    
                while (1/this_prime^2)*(1-( (this_prime - 1)/this_prime)^k)<cutoff:
                    k+=1
                k = k-1
                if verbose:
                    print('We will use k = '+str(k))
                special_case = (1/this_prime^2)*(1-( (this_prime - 1)/this_prime)^k)
                if verbose:
                    print('so we add on '+str(N(special_case)))
                ess.append(special_case)
                fixed_primes.append(this_prime)
                # record the special case primes which are used
                # to form 'm' in part 2
                fixed_primes_cases.append(special_case)
                # also record its multiplicity
                mults.append(k)
                fixed_mults.append(k)
                if verbose:
                    print('appending '+str(k)+' to list of mults')

        if verbose:
            print('if things have gone well we should be closer to or in the epsilon nbh')
            print('with our sum S += B-T or S += psi(new_prime^k)')

        # update running sum
        ess_sum = sum(ess)
        if verbose:
            print('S = '+str(N(ess_sum)))
            print('')
        
        # set the new distance to the target x (actually y)
        # in the paper this is called epsilon_k in Round k
        error=y-ess_sum
        # set also the distance needed to get to the
        # target interval
        cutoff = y-epg-ess_sum
        if verbose:
            print('')
            print('The unnormalized epsilon is '+str(N(epg)))
            print('and the current distance to the unnormalized x is '+str(N(error)))
        if error>epg:
            if verbose:
                print('This is outside the epsilon radius---repeat entire process...')
                print('')
                print('Repeating...')
                print('oooooooooooooooooooooooooooooooooooooo')
            iter+=1
        else:
            if output_flag:
                print('')
                print('')
                if len(prime_list)>= 1:
                    print('We will use the following primes in infinite multiplicity:')
                    print(prime_list)
                if len(fixed_primes) >= 1:
                    print('We will use the following primes with set multiplicites:')
                    for i in range(len(fixed_primes)):
                        print(str(fixed_primes[i])+' with multiplicity '+str(fixed_mults[i]))
                if prime_flag:
                    print('And all the primes past '+str(primes_beyond))
                    print('(with some sufficiently small tail cut off)')

    sumtotal = 0
    for p in prime_list:
        sumtotal += 1/p^2
    for p in fixed_primes_cases:
        sumtotal += p
    if prime_flag:
        sumtotal += tails[b_index]
    renorm = sumtotal*pi/G
    diff_from_x = N(abs(renorm-x))
    errors.append(diff_from_x)

    if verbose:
        print('')
        print('')
        print('###################################################')
        print('')
        print('Time for a sanity check:')
        print('our final value was '+str(N(sumtotal)))
        print('with a normalized value of '+str(N(renorm)))
        print('its distance from x is '+str(diff_from_x))
        print('epsilon was '+str(eps))
        print('')
        print('###################################################')
        print('###################################################')

    # add identified primes and multiplicities (which may be
    # infinity at this point) to our running list
    all_bs.append(b_indices)
    all_ts.append(t_indices)
    all_mults.append(mults)
    
    # since we are definitely below the upper limit of the 
    # epsilon nbhd, if the error exceeds epsilon then we are
    # still below the nbhd and the while loop brings us back 
    # for another round
    


###################################################
# summary of output for all runs

from tabulate import tabulate

for i in range(len(x_list)):
    table = list()
    print('')

    print('********************************************')
    print('sample value # '+str(i+1))
    print('')
    print('x-value:                  '+str(x_list[i]))
    print('had an absolute error of: '+str(errors[i]))
    print('using the primes with indices below')
    for j in range(len(all_bs[i])):
        table.append([all_bs[i][j]+1, all_ts[i][j]+1, all_mults[i][j]])
        # the +1 is to make the iiceses human-readable
    print(tabulate(table, headers=["b_indices", "t_indices", "multiplicities"]))
\end{lstlisting}
\end{nolinenumbers}

\end{document}